\numberwithin{equation}{section}
\theoremstyle{plain}
\newtheorem{theorem}{Theorem}
\newtheorem{corollary}{Corollary}
\newtheorem{proposition}{Proposition}
\newtheorem{remark}{Remark}
\newtheorem{lemma}{Lemma}
\newtheorem{thm}{Theorem}
\newtheorem{cor}[thm]{Corollary}
\newtheorem{qst}[thm]{Question}
\theoremstyle{definition}
\newtheorem{definition}{Definition}
\newcommand{\Z}{\mathbb{Z}}
\newcommand{\N}{\mathbb{N}}
\newcommand{\C}{\mathbb{C}}
\DeclareMathOperator{\Mod}{mod}
\DeclareMathOperator{\Dim}{dim}
\DeclareMathOperator{\Spin}{Spin}
\DeclareMathOperator{\Kink}{kink}
\DeclareMathOperator{\sign}{sign}
\DeclareMathOperator{\So}{SO}
\DeclareMathOperator{\Sl}{SL}
\begin{document}

\author{Gleb Smirnov and Rafael Torres}

\title[Selection rules for topology change in high-dimensional spacetimes]{Topology change and selection rules for high-dimensional $\Spin(1, n)_0$-Lorentzian cobordisms}

\address{Scuola Internazionale Superiori di Studi Avanzati (SISSA)\\ Via Bonomea 265\\34136\\Trieste\\Italy}

\email{gsmirnov@sissa.it}

\email{rtorres@sissa.it}

\subjclass[2010]{57R42, 32Q99}

\maketitle

\emph{Abstract:} We study necessary and sufficient conditions for the existence of Lorentzian and weak Lorentzian cobordisms between closed smooth manifolds of arbitrary dimension such that the structure group of the frame bundle of the cobordism is $\Spin(1, n)_0$. This extends a result of Gibbons-Hawking on $\Sl(2, \C)$-Lorentzian cobordisms between 3-manifolds and results of Reinhart and Sorkin on the existence of Lorentzian cobordisms. We compute the $\Spin(1, n)_0$-Lorentzian cobordism group for several dimensions. Restrictions on the gravitational kink numbers of $\Spin(1, n)_0$-weak Lorentzian cobordisms are obtained.

\tableofcontents

\section{Introduction}

A cobordism is a triple $(M; N_1, N_2)$ that consists of a smooth compact $(n + 1)$-manifold $M$ with non-empty boundary $\partial M = N_1 \sqcup N_2$, where $N_1$ and $N_2$ are smooth closed $n$-manifolds. Cobordisms have been a central topic of research in topology as studied by Thom \cite{[MilnorStasheff]}, Novikov \cite{[Novikov1]}, Milnor \cite{[Milnor2]}, Quillen \cite{[Quillen]}, Wall \cite{[Wall1]}, and Stong \cite{[Stong]}, among several other outstanding mathematicians. These works include the study of additional topological properties on $(M; N_1, N_2)$ such as the existence of a $\Spin(n +1)$-structure on $M$ that induces a $\Spin(n)$-structure on the boundary manifold; see Section \ref{Section Group Structure}. In this paper, we study the following geometric condition on cobordisms.  

\begin{definition}\label{Definition Lorentzian Cobordism}A Lorentzian cobordism between closed smooth $n$-manifolds $N_1$ and $N_2$ is a pair\begin{equation}((M; N_1, N_2), g)\end{equation} that consists of

(A) a cobordism $(M; N_1, N_2)$,

(B.1) a nonsingular Lorentzian metric $(M, g)$ with a timelike line field $V$,

(C.1) and the boundary $\partial M = N_1 \sqcup N_2$ is spacelike, i.e., $(N_1, g|_{N_1})$ and $(N_2, g|_{N_2})$ are Riemannian manifolds, where $g|_{N_i}$ is the restriction of $g$ to $N_i$. \end{definition}

The study of Lorentzian cobordisms has a long tradition itself and it has been studied by several mathematicians and physicists; see, for example, Reinhart \cite{[Reinhart]}, Yodzis \cite{[Yodzis1], [Yodzis2]}, Sorkin \cite{[Sorkin1]}, Gibbons-Hawking \cite{[GibbonsHawking1], [GibbonsHawking2]}, Chamblin \cite{[Chamblin1]}. The pair of Definition \ref{Definition Lorentzian Cobordism} is a mechanism to study the topology change of spacetime, a classical problem in General Relativity, where the spacetime $(M, g)$ interpolates between a pair of spacelike hypersurfaces $N_1$ and $N_2$ that need not share the same topology. This addresses the possible changes in the topology of the universe as time proceeds. We mention at this point that the spacetimes under study violate causality. Indeed, a result of Geroch says that a compact spacetime whose boundary is either spacelike or timelike must contain at least one closed timelike curve \cite{[Geroch]}; see Remark \ref{Remark Geroch}.

Let us describe the contribution to the subject of the present paper. The main objective of this paper is to establish necessary and sufficient topological conditions that two manifolds $N_1$ and $N_2$ of arbitrary dimension need to meet for there to exist a Lorentzian cobordism $((M; N_1, N_2), g)$ for which the triple $(M; N_1, N_2)$ is a $\Spin$ cobordism. Our main results extends a result of Gibbons-Hawking, which motivated the writing of this paper and that we describe in detail in  Section \ref{Section 3+1}. The situation that they studied on $(3 + 1)$-spacetimes can be summarized as follows. The existence of a timelike vector field as in Item (B.1) implies that the Lorentzian manifold $(M, g)$ in the quadruple of Definition \ref{Definition Lorentzian Cobordism} is time-orientable and whose orientable frame bundle is a principal $\So(1, n)_0$-fiber bundle; see Definition \ref{Definition Spin Structures}. There is a Lorentzian cobordism between any closed orientable 3-manifolds \cite{[Reinhart]} and Milnor has shown that any such 3-manifolds are $\Spin$ cobordant \cite{[Milnor2]}; see Section \ref{Section Lorentzian Cobordisms} and Section \ref{Section Group Structure}. However, Gibbons-Hawking's result revealed that the overlap of Lorentzian and $\Spin$ cobordisms, i.e., where the Lorentzian 4-manifold $(M, g)$ of the pair in Definition \ref{Definition Lorentzian Cobordism} is required to have a principal $\Spin(1, 3)_0$-fiber bundle as its frame bundle, does impose a topological restriction on the boundary 3-manifolds $N_1$ and $N_2$. Our  generalization of Gibbons-Hawking's work provides the necessary and sufficient requirements for $(n + 1)$-spacetimes of arbitrary dimension; see Low \cite{[Low1]} for the case of lower dimensions. The main result in this paper is Theorem \ref{Theorem General}, which is stated and proven in Section \ref{General Result}.

Another contribution of this paper is the study of the algebraic structure of such $\Spin(1, n)_0$-spacetimes as we now briefly motivate, and which are described in more detail in Section \ref{Section Group Structure}. Definition \ref{Definition Lorentzian Cobordism} yields an equivalence relation and partitions manifolds into Lorentzian cobordism classes of $n$-manifolds. The Lorentzian cobordism classes form an abelian group under the operation of disjoint union and a graded ring under the cartesian product of manifolds as established by Reinhart \cite{[Reinhart]}. Classical work of Milnor \cite{[Milnor2], [LawsonMichelsohn]} established the analogous situation for $\Spin$ cobordism classes and he also computed several of the $\Spin$ cobordism groups. We compute several $\Spin(1, n)_0$ cobordism groups and briefly discuss the corresponding graded ring in Section \ref{Section Group Structure} and Section \ref{Section Consequences}. 

More general cobordisms for which the boundary manifolds $N_1$ and $N_2$ are partly spacelike and partly timelike, hence relaxing condition (C.1) in Definition \ref{Definition Lorentzian Cobordism}, have been studied by  Finkelstein-Misner \cite{[FinkelsteinMisner]}, Gibbons-Hawking \cite{[GibbonsHawking2]}, Chamblin-Penrose \cite{[ChamblinPenrose]}, Low \cite{[Low2]} among others. We call these objects weak Lorentzian cobordism; see Section \ref{Section Weak Lorentzian Cobordisms}. Gravitational kinks as introduced by Finkelstein-Misner and studied by Gibbons-Hawking are considered in Section \ref{Section Gravitational Kinks}, where we provide examples of weak Lorentzian cobordisms with prescribed gravitational kinking number. Section \ref{Section GRKinksRestrictions} contains strong restrictions on the gravitational kinking numbers of weak Lorentzian cobordisms with an associated principal $\Spin(1, n)_0$-fiber bundle.

All the manifolds in this paper are $C^{\infty}$, Hausdorf and paracompact.

\section{Preliminaries and background results}

We collect in this section several definitions and fundamental results that are involved in the proofs of our results. We do so, not only with the purpose of making the present paper self-contained, but also hoping it will be useful for a broad audience that ranges from physicists to geometers and topologists. 

\subsection{Lorentzian cobordisms}\label{Section Lorentzian Cobordisms} The purpose of this section is to collect several fundamental results in the study of Lorentzian cobordisms, which are our central object of study. We also include two results that had not previously appeared in the literature. We begin by fleshing out the construction of the Lorentzian metric of Definition \ref{Definition Lorentzian Cobordism} and mention that its existence is equivalent to the existence of line field that is transverse to the boundary. The following definition is due to Reinhart \cite{[Reinhart]}.

\begin{definition}\label{Definition Vector Field Cobordism}A Normal Vector Field cobordism between closed smooth $n$-manifolds $N_1$ and $N_2$ is a pair\begin{equation}((M; N_1, N_2), V),\end{equation} that consists of a cobordism $(M; N_1, N_2)$,

(B.2) a line field $V\in \mathfrak{X}(M)$ such that

(C.2) $V$ is interior normal on $N_1$ and exterior normal on $N_2$. \end{definition}

The line field $V$ is assumed to be orientable and without singularities; see \cite{[Markus]} for further details on line fields.

\begin{lemma}\label{Lemma Equivalent Definitions} Let $\{N_1, N_2\}$ be closed smooth n-manifolds. There exists a Lorentzian cobordism $((M; N_1, N_2)), g)$  if and only if there exits a Normal Vector Field cobordism $((M; N_1, N_2)), V)$. 
\end{lemma}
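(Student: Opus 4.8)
The plan is to establish the two implications separately, in each case producing the missing structure from the one we are given.

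For the forward direction, suppose $((M; N_1, N_2), g)$ is a Lorentzian cobordism with timelike line field $V$. I would first observe that $V$ is already the candidate line field for Definition \ref{Definition Vector Field Cobordism}, so only condition (C.2) needs attention: I must arrange that $V$ is transverse to $\partial M$ and points inward along $N_1$, outward along $N_2$. The key point is that along $N_i$ the tangent space splits $g$-orthogonally as $T_xM = T_xN_i \oplus (T_xN_i)^{\perp_g}$, and since $(N_i, g|_{N_i})$ is Riemannian by (C.1), the normal line $(T_xN_i)^{\perp_g}$ is timelike; as $V_x$ is timelike, it cannot lie in the spacelike subspace $T_xN_i$, so $V$ is automatically transverse to the boundary. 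Replacing $V$ by $-V$ on the component $N_1$ if necessary (line fields can be reoriented on each boundary component independently, or one multiplies by a smooth $\pm 1$-valued function supported near the boundary after choosing a collar), I get the inward/outward behavior required in (C.2). Orientability and absence of singularities of $V$ are inherited from the hypotheses.

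For the converse, suppose $((M; N_1, N_2), V)$ is a Normal Vector Field cobordism. Here the standard move is to build a Lorentzian metric by a Wick-type rotation of an auxiliary Riemannian metric along the line field. Concretely: $M$ is compact, hence paracompact, so choose any Riemannian metric $h$ on $M$, and after rescaling assume $h(V,V) = 1$ (working with a local unit section; since $V$ is an orientable line field this is globally consistent up to sign, which does not affect the construction). Let $V^\flat$ be the $h$-dual $1$-form and define
\begin{equation}
g := h - 2\, V^\flat \otimes V^\flat.
\end{equation}
Then $g(V,V) = 1 - 2 = -1 < 0$, while on the $h$-orthogonal complement of $V$ the form $g$ agrees with $h$ and is positive definite; hence $g$ is a nonsingular Lorentzian metric of signature $(1,n)$ with $V$ timelike, giving (B.1). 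For (C.1), note that along $\partial M$ the line field $V$ is normal to $N_i$ with respect to $h$ by hypothesis (C.2) — here one may first replace $h$ by a metric that makes $V$ orthogonal to the boundary, using a collar $N_i \times [0,\varepsilon)$ in which $V$ is the coordinate field, which is possible precisely because $V$ is transverse to $\partial M$ — so $T_xN_i$ lies in the $h$-orthogonal complement of $V$, on which $g = h$; thus $g|_{N_i} = h|_{N_i}$ is Riemannian. The inward/outward normal condition on $V$ is exactly what Definition \ref{Definition Lorentzian Cobordism} asks for the cobordism data, so this completes the construction.

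The main obstacle is the bookkeeping in the converse near the boundary: one must make sure the auxiliary Riemannian metric can be chosen so that $V$ is genuinely $h$-orthogonal to $\partial M$ (not merely transverse) on a collar, so that the rotated metric $g$ restricts to a Riemannian metric on each $N_i$ rather than merely a nondegenerate one; this is handled by a partition-of-unity argument gluing a collar-adapted metric to an arbitrary metric in the interior, but it is the one place where care is needed. The orientability hypothesis on $V$ is used throughout to ensure $V^\flat$ and the unit section are globally defined; if $V$ is only a line field (not a vector field) one works on the orientation double cover or notes that $V^\flat \otimes V^\flat$ is well defined regardless of the local choice of sign, which is the cleaner way to phrase it.
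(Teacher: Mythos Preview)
Your argument follows the same route as the paper's: the Lorentzian metric is built from a Riemannian one via the same Wick-type formula (your $h - 2\,V^\flat\otimes V^\flat$ is exactly the paper's $g_R - 2\,g_R(\cdot,V)g_R(\cdot,V)/g_R(V,V)$ after normalizing), and transversality of the timelike direction at a spacelike boundary is the key observation in both directions. Your forward direction is in fact slightly more direct than the paper's, which reconstructs a timelike field by diagonalizing $g$ against an auxiliary Riemannian metric rather than simply using the $V$ already given by Definition~\ref{Definition Lorentzian Cobordism}.

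There is one genuine wobble: your proposed fix for the inward/outward condition (C.2) does not work as written. On a connected $M$, an orientable nonvanishing line field is a global vector field, and its sign cannot be flipped on one boundary component alone; multiplying by a smooth function that changes sign necessarily introduces zeros. (Concretely, on $S^1\times[0,1]$ one can write down a Lorentzian metric with spacelike boundary whose timelike field points \emph{inward} on both circles.) The paper's proof glosses over the very same point. The honest repair is to observe that once you have \emph{any} nonvanishing field transverse to $\partial M$, the Euler-characteristic conditions of Theorem~\ref{Theorem Sorkin} are satisfied, and that theorem then furnishes a (possibly different) vector field on the same $M$ with the prescribed in/out behaviour.
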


The following argument is well-known and we include it for the sake of completeness in our exposition (see \cite[Section 2.7]{[HawkingEllis]}, \cite[Section III]{[Yodzis1]}. 

\begin{proof} Suppose there exists a Normal Vector Field Cobordism. In the presence of a line field $V$, one defines a Lorentzian metric by\begin{equation}\label{New LMetric}g(X, Y):= g_R(X, Y) - \frac{2g_R(X, V)g_R(Y, V)}{g_R(V, V)}\end{equation} where $(M, g_R)$ is a Riemannian metric. The line field $V$ is timelike with respect to (\ref{New LMetric}) and $(M, g)$ is a spacetime. Moreover, an orthonormal basis for $g_R$ is an orthonormal basis for (\ref{New LMetric}) too whenever $V$ is one of the basis vectors. We now show the pullback of (\ref{New LMetric}) under embeddings of $N_1$ and $N_2$ yield a Riemannian metric by using the hypothesis of transversality of $V$ at the boundary. Fix a boundary component $N:= N_i$, let $\iota:N\hookrightarrow M$ be the inclusion of the boundary, and denote a basis for $T_pN$ by $\{e_1, \ldots, e_n\}$. Since $V$ is assumed to be transverse to $T_pN$, there exists a vector field $V$ that is orthogonal to every vector in the subspace $\iota_{\ast}(T_p N)\subset T_{\iota(p)} M$ and so that $\{\iota_\ast(e_1), \ldots, \iota_\ast(e_n), V\}$ is a basis for $T_{\iota(p)}M$. The components of (\ref{New LMetric}) under this basis are \[g_{\alpha \beta} =  \left( \begin{array}{cc}
g(V, V) & 0  \\
0 & g(\iota_\ast(e_i), \iota_\ast(e_j)) \end{array}\right)  =  \left( \begin{array}{cc}
g(V, V) & 0  \\
0 & \iota^\ast g(e_i, e_j) \end{array}\right).\] The matrix $g_{\alpha \beta}$ has one negative eigenvalue. Since $g(V, V) < 0$ and $g$ is a Lorentzian metric, we conclude that $\iota^\ast g$ is positive-definite. This implies that the restriction of $g$ to $\partial M$ is a Riemannian metric, and $N_1$ and $N_2$ are spacelike hypersurfaces. 
Suppose there exists a Lorentzian Cobordism and equip the spacetime $(M, g)$ with a Riemannian metric $g_R$. Diagonalize the Lorentzian metric with respect to $g_R$ \cite[\textsection 40]{[Steenrod]}. An eigenvector of negative eigenvalue defines a line field $(\pm V)$. Since $g$ is time orientable, we can resolve the $\pm 1$ ambiguity and the normalize it with respect to $g_R$ to obtain a line field for $M$ as in Definition \ref{Definition Vector Field Cobordism} \cite[p. 40]{[HawkingEllis]}, \cite[Section 2]{[Markus]}. A modification to our previous arguments allows us to conclude that $V$ is transverse to the boundary. 
\end{proof}

Lemma \ref{Lemma Equivalent Definitions} explains the relevance of obtaining an answer to the following question in the study of Lorentzian cobordisms.

\begin{qst}\label{Question 2} Given a cobordism $(M; N_1, N_2)$, under which conditions does there exist a line field $V\in \mathfrak{X}(M)$ as in Definition \ref{Definition Vector Field Cobordism}?

\end{qst}

By using work of Thom amongst others \cite{[Stong], [MilnorStasheff]} to guarantee the existence of a cobordism $(M; N_1, N_2)$, Reinhart provided a complete answer Question \ref{Question 2} in terms of characteristic numbers. A complete answer to Question \ref{Question 2} was also given  independently by Sorkin.

\begin{theorem}\label{Theorem Sorkin} Reinhart \cite[Theorem (1)]{[Reinhart]}, Sorkin \cite[Section II]{[Sorkin1]}. Let $(M; N_1, N_2)$ be a cobordism between closed $n$-manifolds $N_1$ and $N_2$. There exists a line field $V\in \mathfrak{X}(M)$ for which $((M; N_1, N_2), V)$ is a Normal Vector Field cobordism if and only if \begin{equation}\chi(M) = 0\end{equation}whenever the dimension of $M$ is even or\begin{equation}\chi(N_1) = \chi(N_2)\end{equation}provided that the dimension of $M$ is odd.
\end{theorem}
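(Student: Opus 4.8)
The plan is to translate the problem into one about nowhere-zero vector fields and then apply index theory. A Normal Vector Field cobordism structure on $(M; N_1, N_2)$ is, by definition, an orientable nonsingular line field $V$ transverse to $\partial M$ and oriented so as to point into $M$ along $N_1$ and out of $M$ along $N_2$; being orientable, $V$ is precisely the line spanned by a nowhere-zero vector field $W \in \mathfrak{X}(M)$ that is interior-normal along $N_1$ and exterior-normal along $N_2$, and conversely. So it suffices to decide when such a $W$ exists. I would first record that the two displayed conditions are a single one in disguise: applying Mayer--Vietoris to the double $DM = M \cup_{\partial M} M$ gives $\chi(DM) = 2\chi(M) - \chi(\partial M)$, and since $DM$ is closed this yields $\chi(\partial M) = 2\chi(M)$ when $\dim M$ is odd and $\chi(\partial M) = 0$ when $\dim M$ is even; combined with the vanishing of $\chi$ on odd-dimensional closed manifolds, the condition ``$\chi(M) = 0$ if $\dim M$ is even, $\chi(N_1) = \chi(N_2)$ if $\dim M$ is odd'' becomes, in either parity, the single relation $\chi(M) = \chi(N_1)$, i.e.\ $\chi(M, N_1) = 0$. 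We may also assume $M$ connected.

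For necessity, suppose $W$ as above exists. The Poincar\'e--Hopf theorem for a compact manifold with boundary, in the form where the vector field points inward along a union of boundary components $\partial_- M$ and outward along the complementary union $\partial_+ M$, asserts that the sum of the indices of its interior zeros equals $\chi(M) - \chi(\partial_- M) = \chi(M, \partial_- M)$. Applying this with $\partial_- M = N_1$ and $\partial_+ M = N_2$ to the nowhere-zero $W$ gives $0 = \chi(M) - \chi(N_1) = \chi(M, N_1)$, which by the previous paragraph is exactly the asserted condition.

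For sufficiency, assume $\chi(M, N_1) = 0$. Pick a Morse function $f \colon M \to [0,1]$ with $f^{-1}(0) = N_1$, $f^{-1}(1) = N_2$ and no critical points on $\partial M$, and set $W_0 = \nabla f$ for an auxiliary Riemannian metric; then $W_0$ is interior-normal on $N_1$, exterior-normal on $N_2$, has finitely many zeros, and the index of $W_0$ at a critical point of Morse index $k$ is $(-1)^k$, so from the associated handle decomposition (or the boundary Poincar\'e--Hopf theorem again) $\sum \Ind(W_0) = \chi(M, N_1) = 0$. It then remains to remove all zeros of $W_0$ while leaving $W_0$ unchanged near $\partial M$. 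Since $M$ is connected and $\dim M \geq 2$, I would join the zeros by disjoint embedded arcs in $\Int M$ and, working in tubular balls around them, successively merge two zeros into one of index equal to their sum (using that homotopy classes of self-maps of $S^{\dim M - 1}$ are detected by degree), arriving at a single zero of index $\chi(M, N_1) = 0$, which is then deleted by replacing $W_0$ inside a ball by a nowhere-zero field with the same boundary values (a degree-zero map $S^{\dim M - 1} \to S^{\dim M - 1}$ extends over the ball). The resulting field is the desired $W$; the case $\dim M = 1$ is immediate and can be treated by hand.

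The step I expect to be the crux is this last cancellation of zeros in the sufficiency direction: upgrading ``total index zero'' to an honest nowhere-zero field relative to the boundary. It is conceptually cleanest via obstruction theory --- the obstruction to a nowhere-zero section of $TM$ extending the prescribed boundary field is the primary and, since $\pi_i(S^{\dim M - 1}) = 0$ for $i < \dim M - 1$, the only obstruction; it lives in $H^{\dim M}(M, \partial M)$ with coefficients in the orientation local system, and it coincides with the index class, so $\chi(M, N_1) = 0$ kills it. I would also keep a light touch on two routine points: the precise meaning of ``interior/exterior normal'' when $N_1$ or $N_2$ is disconnected, and the description of the index class when $M$ is non-orientable; neither affects the argument in the situations relevant to the rest of the paper.
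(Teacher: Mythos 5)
The paper does not actually prove this statement; it is cited to Reinhart and Sorkin, with only a passing remark that the Poincar\'e--Hopf theorem is the engine behind it. Your argument is correct and is, as far as one can tell from Reinhart's paper, essentially the one the cited sources use: reduce to the relative Euler characteristic $\chi(M,N_1)$, get necessity from the mixed-boundary Poincar\'e--Hopf formula $\sum \operatorname{ind} = \chi(M,N_1)$, and get sufficiency by cancelling zeros of a Morse gradient or, equivalently, by the obstruction-theoretic observation that the unique obstruction to extending the prescribed boundary field as a nowhere-zero section of $TM$ lies in $H^{\dim M}(M,\partial M;\widetilde{\mathbb{Z}})$ and is the index class.

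One small point worth flagging, though it concerns the theorem's phrasing rather than your proof: as stated for a fixed cobordism $(M;N_1,N_2)$, the ``if'' direction requires $M$ to be connected (or the numerical condition to hold componentwise). For disconnected $M$ one can have $\chi(M)=0$ while some component has nonzero relative Euler characteristic, so no line field transverse to the boundary with the required co-orientation exists. Your line ``we may also assume $M$ connected'' is exactly the right hypothesis to make the theorem true as written; Reinhart's original formulation avoids the issue by quantifying over cobordisms rather than fixing one, which is presumably the intended reading here as well.
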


The requirement on the vector field $V$ to be transverse to the boundary $\partial M$ as in Definition \ref{Definition Vector Field Cobordism} is of fundamental importance for the computations in the proof of Theorem \ref{Theorem Sorkin}. Under this requirement and assuming $V$ points outward at every point $p\in \partial M$, the Poincar\'e-Hopf Theorem indicates that the index of the vector field equals the Euler characteristic of the manifold \cite[p. 135]{[Hirsch]}, \cite[$\S$ 6]{[Milnor3]}. We will consider more general vector fields and Lorentzian manifolds whose boundary is not entirely spacelike in the following section.

\subsection{Weak Lorentzian Cobordisms}\label{Section Weak Lorentzian Cobordisms} A smooth manifold admits a Lorentzian metric if and only it has a non-vanishing line field (cf. Proof of Lemma \ref{Lemma Equivalent Definitions}). Every smooth manifold with non-empty boundary has a non-vanishing vector field \cite[2.7 Theorem, Chapter 5.2]{[Hirsch]} (cf. \cite[Proof of Corollary]{[Percell]}), and we can define a Lorentzian metric as in (\ref{New LMetric}) in order to obtain the following proposition. 

\begin{proposition}\label{Proposition Lorentz Boundary} Let $M$ be a smooth manifold with non-empty boundary. There is a Lorentzian metric $(M, g)$ with a timelike vector field $V\in \mathfrak{X}(M)$.

\end{proposition}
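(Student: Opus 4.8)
The plan is to build the Lorentzian metric directly from a nowhere-zero vector field on $M$, exactly as in the proof of Lemma \ref{Lemma Equivalent Definitions}. First I would invoke the cited fact that every smooth manifold with non-empty boundary admits a nowhere-vanishing vector field $V \in \mathfrak{X}(M)$ (\cite[Chapter 5.2, Theorem 2.7]{[Hirsch]}); the point is that the obstruction to such a field is the Euler characteristic relative to the boundary, and a manifold with boundary deformation retracts onto a lower-dimensional complex so this obstruction vanishes. (If one prefers, one can first produce the field on the double or extend a collar field, but Hirsch's statement makes this unnecessary.) Note that since $\partial M \neq \emptyset$, there is no requirement here that $V$ be transverse to or point inward/outward along the boundary — we only need it to be nowhere zero on $M$.

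Next I would equip $M$ with an auxiliary Riemannian metric $g_R$, which exists by paracompactness and a partition-of-unity argument, and define $g$ by the same formula \eqref{New LMetric}, namely
\begin{equation*}
g(X, Y) := g_R(X, Y) - \frac{2\, g_R(X, V)\, g_R(Y, V)}{g_R(V, V)}.
\end{equation*}
Since $V$ is nowhere zero, $g_R(V,V) > 0$ everywhere, so $g$ is a smooth symmetric $2$-tensor. To see that $g$ is a Lorentzian metric, I would diagonalize pointwise: complete $V/\sqrt{g_R(V,V)}$ to a $g_R$-orthonormal frame; on the span of the remaining frame vectors $g$ agrees with $g_R$ and is positive-definite, while $g(V,V) = g_R(V,V) - 2 g_R(V,V) = -g_R(V,V) < 0$. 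Hence $g$ has signature $(1, n)$ at every point, and $V$ is timelike for $g$.

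Finally I would observe that $V$, being a globally defined timelike vector field, gives a time-orientation of $(M, g)$, so $(M, g)$ is a spacetime in the sense required. This completes the argument; there is essentially no obstacle beyond citing the existence of the nowhere-zero vector field, which is the only nontrivial input and is standard. The one point worth a sentence of care is that \eqref{New LMetric} was originally written for a \emph{line} field, but a vector field determines a line field, and the formula only depends on $V$ through $g_R(\cdot, V)$ up to the sign-independent combination in the numerator together with the denominator, so the passage is immediate.
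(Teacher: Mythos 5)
Your argument is correct and follows the paper's own route exactly: cite Hirsch's theorem (every smooth manifold with non-empty boundary admits a nowhere-vanishing vector field), pick an auxiliary Riemannian metric $g_R$, and apply formula \eqref{New LMetric} to obtain a Lorentzian metric for which $V$ is timelike. The only difference is that you spell out the pointwise diagonalization verifying the signature, which the paper leaves implicit by referring back to the proof of Lemma~\ref{Lemma Equivalent Definitions}.
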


A vector field for a spacetime as in Proposition \ref{Proposition Lorentz Boundary}, however, need not be transverse to the boundary $\partial M$. This scenario motivates the following generalization of Definition \ref{Definition Vector Field Cobordism}. 





\begin{definition}\label{Definition Weak Lorentzian Cobordism}\label{Definition weak Lorentzian cobordism}A weak Lorentzian cobordism between closed smooth $n$-manifolds $N_1$ and $N_2$ is a pair\begin{equation}((M; N_1, N_2), g)\end{equation} that consists of 

(A) a cobordism $(M; N_1, N_2)$

(B.4) with a nonsingular Lorentzian metric $(M, g)$ and a timelike non-vanishing vector field $V$,

(C.4) and whose boundary $\partial M = N_1 \sqcup N_2$ is partly spacelike and partly timelike. \end{definition}

The condition of Item (C.4) says that the restriction $g|_{N_i}$ is not positive-definite like in Definition \ref{Definition Lorentzian Cobordism}, instead we allow for there to be points $p_i\in N_i$ where it is degenerate. The interested reader can replicate the scenario that was described in Section \ref{Section Lorentzian Cobordisms} and produced the corresponding versions of Definition \ref{Definition Vector Field Cobordism} and Lemma \ref{Lemma Equivalent Definitions} in this context. We observe that given a cobordism $(M; N_1, N_2)$, Proposition \ref{Proposition Lorentz Boundary} produces a time-orientable Lorentzian metric $g$ for which $((M; N_1, N_2), g)$ is a weak Lorentzian cobordism.

\begin{proposition}\label{Proposition Weak Lorentzian Cobordism}Two closed smooth manifolds are weak Lorentzian cobordant if and only if they are cobordant. 
\end{proposition}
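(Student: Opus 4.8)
The plan is to prove both implications. The forward direction is immediate: if $((M; N_1, N_2), g)$ is a weak Lorentzian cobordism, then in particular $(M; N_1, N_2)$ is a cobordism by Item (A) of Definition \ref{Definition Weak Lorentzian Cobordism}, so $N_1$ and $N_2$ are cobordant. The content is therefore entirely in the converse: given an ordinary cobordism $(M; N_1, N_2)$ between closed smooth $n$-manifolds, one must produce a weak Lorentzian structure on it.

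For the converse, I would argue exactly as remarked in the paragraph following Definition \ref{Definition Weak Lorentzian Cobordism}. Since $M$ is a compact manifold with non-empty boundary $\partial M = N_1 \sqcup N_2$, it admits a non-vanishing vector field $V \in \mathfrak{X}(M)$ by \cite[Chapter 5.2, Theorem 2.7]{[Hirsch]} (the obstruction to a non-vanishing vector field on a compact manifold with boundary is the relative Euler class, which vanishes here because one may always push a generic vector field off its zeros through the boundary). Fixing a Riemannian metric $g_R$ on $M$ and normalizing, we may assume $g_R(V,V) = 1$. Then define the Lorentzian metric $g$ by the formula (\ref{New LMetric}) from the proof of Lemma \ref{Lemma Equivalent Definitions}; as shown there, $g$ is a nonsingular Lorentzian metric on $M$ with respect to which $V$ is timelike (indeed $g(V,V) = -g_R(V,V) < 0$), so $(M,g)$ is a time-orientable spacetime satisfying Item (B.4). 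This establishes everything except the boundary condition (C.4).

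The one genuinely delicate point — and the main obstacle — is verifying Item (C.4), namely that $\partial M$ is ``partly spacelike and partly timelike'' in the sense of the paragraph after Definition \ref{Definition Weak Lorentzian Cobordism}: $g|_{N_i}$ must fail to be positive-definite only up to being degenerate at isolated points, not actually Lorentzian on an open set. The subtlety is that an \emph{arbitrary} non-vanishing $V$ could be tangent to $\partial M$ along large sets, making $g|_{N_i}$ genuinely indefinite there, which is not what (C.4) permits. I would handle this by choosing $V$ more carefully: take $V$ to be transverse to $\partial M$ outside a codimension-zero collar-avoiding modification, or more simply, start from a vector field $V_0$ that is interior-normal on $N_1$ and exterior-normal on $N_2$ near the boundary (such a $V_0$ exists on a collar, e.g. the outward normal), extend it over $M$ to a possibly-singular vector field, and then perturb generically in the interior to remove zeros while keeping the boundary behaviour fixed — the zeros can be cancelled in pairs or pushed to the boundary since $\partial M \neq \emptyset$. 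With such a $V$, the computation in the proof of Lemma \ref{Lemma Equivalent Definitions} (the block-diagonal form of $g_{\alpha\beta}$) shows $g|_{N_i}$ is Riemannian wherever $V$ is transverse, and degenerate precisely along the locus where $V$ becomes tangent to $\partial M$, which after the generic choice is a proper submanifold — giving a boundary that is partly spacelike and partly timelike as required. One should remark that this in fact shows weak Lorentzian cobordism is no restriction at all beyond ordinary cobordism, in sharp contrast to Theorem \ref{Theorem Sorkin}, the difference being exactly that (C.4) relaxes the transversality in Item (C.2).
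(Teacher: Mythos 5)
Your argument is correct and matches the paper's route exactly: the forward direction is immediate from Item (A) of Definition~\ref{Definition Weak Lorentzian Cobordism}, and the converse is Proposition~\ref{Proposition Lorentz Boundary} together with the metric construction~(\ref{New LMetric}). One small internal tension in your extra discussion of Item (C.4): you cannot simultaneously ``keep the boundary behaviour fixed'' (transversal everywhere, as $V_0$) and ``push the zeros to the boundary'' --- pushing zeros off through $\partial M$ necessarily destroys transversality at those points, which is precisely what produces the mixed spacelike/timelike boundary the definition allows; the paper simply reads (C.4) as a catch-all condition satisfied by any non-vanishing $V$, so none of this bookkeeping is needed.
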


\subsection{Gravitational kink number}\label{Section Gravitational Kinks}We now extend the definition of gravitational kink number due to Gibbons-Hawking \cite{[GibbonsHawking2]} in the case of $(3 + 1)$-spacetimes to arbitrary dimensions; a similar discussion appears in Chamblin \cite{[Chamblin2]}. Let $(M, g)$ be a time-orientable spacetime with non-empty boundary $N = \partial M \subset M$, and consider a Riemannian metric $(M, g_R)$. Diagonalize the Lorentzian metric $(M, g)$ with respect to $g_R$, and obtain a line field $\pm V_g$ from the eigenvector with negative eigenvalue of the diagonalization. The assumption of $(M, g)$ being time-orientable allows us to resolve the sign ambiguity and we normalize $V_g$ to have unit length. Let $S(M)\subset TM$ be the unit sphere bundle with respect to $g_R$. As we consider $M$ to be $(n + 1)$-dimensional, $S(M)$ is a $2n + 1$-manifold. The unit non-vanishing vector field obtained is a section $V_g: M\rightarrow S(M)$. We obtain a bundle over a connected component of the boundary by restricting $S(M)$ to $N$ as in the following diagram
\begin{equation}\label{Diagram of Bundles}
\xymatrix{
S^n\subset \imath^{\ast}(S(M)) \ar[d] \ar[r] &S(M)\ar[d]\\
N \ar@{^{(}->}[r]^{\imath}     &M}
\end{equation}

where $\imath:N\hookrightarrow M$ is the inclusion map and the fiber of the pullback bundle $S(N):= \imath^{\ast}(S(M))\rightarrow N$ is the $n$-sphere. $S(N)$ is a $2n$-manifold with two global sections\begin{equation}\label{Sections}S_{V_g}, S_{\vec n}:N\rightarrow S(N)\end{equation}that are determined by the vector field $V_g$ and the unit inward pointing normal $\vec n$ to the boundary component $N$. The two sections (\ref{Sections}) can be chosen to intersect generically at a finite number of isolated points $p_i$ \cite{[Milnor3]}. Define the $\sign$ of $p_i$ to be $+1$ if the orientation of $S(N)$ coincides with the product of the orientations of the sections (\ref{Sections}), and set $\sign = - 1$ otherwise. Abusing notation and denoting the intersection of the manifolds by $p_i$, we have the following definition. 

\begin{definition}\label{Definition Gravitational Kink Number}The gravitational kink number is defined as (c.f. \cite{[FinkelsteinMisner], [GibbonsHawking2]})\begin{equation}\Kink(N; g):=\sum \sign p_i.\end{equation}

\end{definition}

Gravitational kink numbers were originally introduced by Finkelstein-Misner \cite{[FinkelsteinMisner]} and have been studied by Gibbons-Hawking \cite{[GibbonsHawking2]}, Chamblin-Penrose \cite{[ChamblinPenrose]}, Low \cite{[Low2]} among others. There are several definitions in the literature, and it has been proven that they are all equivalent to Definition \ref{Definition Gravitational Kink Number}; see \cite{[Low2]}. Theorem \ref{Theorem Sorkin} has the following extension to cobordisms as in Definition \ref{Definition Weak Lorentzian Cobordism}. Recall that a smooth $n$-manifold $N$ is stably parallelizable if the Whitney sum of its tangent bundle with a trivial rank one bundle $TN\oplus \epsilon$ is the trivial bundle $\epsilon^{n + 1}$.

\begin{theorem}\label{Theorem Gravitational Kinks}Gibbons-Hawking \cite{[GibbonsHawking2]}, Low \cite[Theorem 3.1]{[Low2]}. Let $((M; N_1, N_2), g)$ be a weak Lorentzian cobordism between closed stably parallelizable $n$-manifolds $N_1$ and $N_2$. We have\begin{equation}\chi(M) = \Kink(\partial M; g)\end{equation}whenever the dimension of $M$ is even or\begin{equation}\frac{1}{2}(\chi(N_2) - \chi(N_1)) = \Kink(\partial M; g)\end{equation}provided that the dimension of $M$ is odd.
\end{theorem}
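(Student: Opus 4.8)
The plan is to reduce the statement to the Poincaré--Hopf index theorem applied to the line field $V_g$ associated with the Lorentzian metric $g$, keeping careful track of the behavior at the boundary, which is now only partly spacelike. First I would recall the setup of Section~\ref{Section Gravitational Kinks}: from $(M,g)$ together with an auxiliary Riemannian metric $g_R$ one extracts a unit line field $V_g$, and since $(M,g)$ is time-orientable this lifts to an honest non-vanishing unit vector field $V_g : M \to S(M)$. On each boundary component $N_i$ there are two sections $S_{V_g}, S_{\vec n} : N_i \to S(N_i)$ of the sphere bundle, and $\Kink(N_i; g)$ is their algebraic intersection number. The key topological input I would invoke is that the obstruction to homotoping $V_g$ (rel nothing) to a vector field that is transverse to $\partial M$ and inward/outward-pointing in the prescribed way is measured precisely by these kink numbers; this is where stable parallelizability of the $N_i$ enters, because it trivializes $S(N_i) \cong N_i \times S^n$ so that the two sections become maps $N_i \to S^n$ and their intersection number is a well-defined integer independent of choices.

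Next I would split into the two parity cases. When $\dim M = n+1$ is even: homotope $V_g$ near the boundary, at the cost of creating zeros in the interior whose total index equals $-\Kink(\partial M; g)$ (the sign bookkeeping coming from the convention fixing $\sign p_i$ and the orientation of $S(N_i)$), to a vector field that is outward-pointing along all of $\partial M$. For such a vector field Poincaré--Hopf gives total index $= \chi(M)$. Reconciling the two index counts yields $\chi(M) = \Kink(\partial M; g)$. When $\dim M$ is odd: here $\chi(M) = \tfrac12\chi(\partial M) = \tfrac12(\chi(N_1)+\chi(N_2))$ automatically, so one instead compares a vector field that is outward along $N_2$ and inward along $N_1$ (the ``Morse-type'' boundary condition), whose index is $\chi(M) - \chi(N_1)$ by the relative Poincaré--Hopf theorem, against one that is outward everywhere, whose index is $\chi(M)$; the difference $\chi(N_1)$, combined with the kink contributions on the two components, rearranges into $\tfrac12(\chi(N_2) - \chi(N_1)) = \Kink(\partial M; g)$. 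I would lean on Theorem~\ref{Theorem Sorkin} as the model computation and on the equivalence of kink definitions quoted from \cite{[Low2]} to make the sign conventions consistent.

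The main obstacle I expect is the sign and orientation bookkeeping: getting the relative Poincaré--Hopf index formula with the correct sign on each of $N_1$ and $N_2$, matching it to the sign convention in Definition~\ref{Definition Gravitational Kink Number} (which is phrased via the orientation of $S(N)$ versus the product orientation of the two sections), and ensuring the factor of $\tfrac12$ in the odd-dimensional case emerges correctly rather than a factor of $1$ or a stray sign. A secondary technical point is justifying that the homotopy of $V_g$ can be localized near $\partial M$ without uncontrolled index changes, and that stable parallelizability is exactly what is needed — not more — to make the intersection number $\Kink(N_i;g)$ well-defined and additive over boundary components. Since this theorem is attributed to Gibbons--Hawking and Low, I would present the argument as a streamlined account, citing \cite{[GibbonsHawking2]} and \cite[Theorem 3.1]{[Low2]} for the full sign verification and focusing the exposition on the conceptual reduction to Poincaré--Hopf.
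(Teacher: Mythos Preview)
The paper does not actually supply a proof of this theorem: it is stated with attribution to Gibbons--Hawking \cite{[GibbonsHawking2]} and Low \cite[Theorem 3.1]{[Low2]}, and the exposition immediately moves on to corollaries. So there is no in-paper argument to compare your proposal against.

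That said, your outline is essentially the argument given by Low in \cite{[Low2]}: interpret the kink number as an intersection/degree invariant of the section $S_{V_g}$ against $S_{\vec n}$ in the (trivialized) sphere bundle, and then invoke the Hopf/Poincar\'e--Hopf index theorem for a non-vanishing vector field on a manifold with boundary. Your identification of stable parallelizability as the hypothesis that makes $S(N_i)\cong N_i\times S^n$ and hence renders the kink number a genuine mapping degree is exactly the point. The parity split you describe is also correct in spirit; in the odd-dimensional case the relevant input is that $\chi(M)=\tfrac12\chi(\partial M)$ together with the relative index count you indicate. Your stated concerns about sign bookkeeping are well-placed but are precisely what Low's paper handles, so citing \cite{[Low2]} for the verification, as you propose, is appropriate. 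In short: your proposal is a faithful sketch of the cited proof, and since the present paper offers no independent argument, there is nothing further to reconcile.
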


In the case where the restriction of the Lorentzian metric $g|_{\partial M}$ is positive-definite, the gravitational kink number is $\Kink(\partial M, g) = 0$. Lorentzian cobordisms are characterized by having gravitational kink number zero in the following sense.

\begin{corollary} If $((M; N_1, N_2), g)$ is a Lorentzian cobordism, then\begin{equation}\label{Kink Corollary}\Kink(\partial M, g) = 0.\end{equation}Moreover, if $((M; N_1, N_2), g)$ is a weak Lorentzian cobordism with gravitational kink number as in (\ref{Kink Corollary}), then there exists a Lorentzian metric $(M, g')$ such that $((M; N_1, N_2), g')$ is a Lorentzian cobordism. 
\end{corollary}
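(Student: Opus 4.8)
The plan is to prove the two assertions of the corollary separately, using Theorem \ref{Theorem Gravitational Kinks} together with the construction of the Lorentzian metric in the proof of Lemma \ref{Lemma Equivalent Definitions}. For the first assertion, suppose $((M; N_1, N_2), g)$ is a Lorentzian cobordism. By Lemma \ref{Lemma Equivalent Definitions} there is an associated timelike line field $V$ that is interior normal on $N_1$ and exterior normal on $N_2$; equivalently, the boundary is spacelike. I would then argue directly from Definition \ref{Definition Gravitational Kink Number}: on each boundary component $N$, the section $S_{V_g}$ determined by the (normalized) timelike line field and the section $S_{\vec n}$ determined by the unit inward normal either coincide everywhere (on $N_1$, where $V$ is already inward normal) or are antipodal everywhere (on $N_2$, where $V$ is outward normal). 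In either case the two sections $S_{V_g}, S_{\vec n}: N \to S(N)$ have disjoint images, so they have no intersection points, and hence $\Kink(N; g) = \sum \sign p_i = 0$ as an empty sum. Summing over the two boundary components gives $\Kink(\partial M, g) = 0$. This reproves the remark made in the paragraph just before the corollary; one subtlety is that $g$ in the corollary need not be exactly the metric (\ref{New LMetric}) built from $V$, but since the kink number is computed from the \emph{line field} obtained by diagonalizing $g$ against an auxiliary Riemannian metric, and that line field is timelike and transverse to $\partial M$ in the required sense (inward on $N_1$, outward on $N_2$) precisely because $\partial M$ is spacelike, the argument is unaffected.

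For the second assertion, suppose $((M; N_1, N_2), g)$ is a weak Lorentzian cobordism with $\Kink(\partial M, g) = 0$. The goal is to produce a new Lorentzian metric $g'$ making the boundary spacelike. The natural route is to first produce a timelike line field $V'$ on $M$ that is interior normal on $N_1$ and exterior normal on $N_2$ — i.e., to build a Normal Vector Field cobordism structure — and then invoke Lemma \ref{Lemma Equivalent Definitions} to obtain $g'$. To get $V'$, I would apply Theorem \ref{Theorem Sorkin}: it suffices to check that $\chi(M) = 0$ when $\dim M$ is even, and $\chi(N_1) = \chi(N_2)$ when $\dim M$ is odd. But the hypothesis $\Kink(\partial M, g) = 0$ fed into Theorem \ref{Theorem Gravitational Kinks} gives exactly $\chi(M) = 0$ in the even case and $\tfrac12(\chi(N_2) - \chi(N_1)) = 0$, i.e.\ $\chi(N_1) = \chi(N_2)$, in the odd case. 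Hence the hypotheses of Theorem \ref{Theorem Sorkin} are met, a Normal Vector Field cobordism $((M; N_1, N_2), V')$ exists, and Lemma \ref{Lemma Equivalent Definitions} yields the desired Lorentzian cobordism $((M; N_1, N_2), g')$.

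The main obstacle — and the place where care is genuinely needed — is that Theorem \ref{Theorem Gravitational Kinks} as stated requires the boundary manifolds $N_1$ and $N_2$ to be \emph{stably parallelizable}, whereas the corollary imposes no such hypothesis. So the clean deduction above only works under that extra assumption; in general one must supply an independent argument that $\Kink(\partial M, g) = 0$ forces the Euler-characteristic conditions of Theorem \ref{Theorem Sorkin}. I would handle this by revisiting the Poincaré–Hopf / obstruction-theoretic content behind Definition \ref{Definition Gravitational Kink Number}: the kink number of a boundary component is precisely the algebraic count of points where the timelike line field fails to be transverse to $\partial M$ with the prescribed inward/outward orientation, so it measures the obstruction to homotoping $V$ (rel nothing, over $M$) to a field that is normal and inward-pointing on all of $\partial M$. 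Vanishing of the total kink number over $\partial M$ is then exactly the condition that allows such a homotopy, which in turn is equivalent (via the index computation underlying Theorem \ref{Theorem Sorkin}) to $\chi(M) = 0$ in even dimensions and $\chi(N_1) = \chi(N_2)$ in odd dimensions — with no parallelizability needed, since one is only counting zeros and transversality failures of a single vector field on $M$ itself. Once that equivalence is in hand, the conclusion follows from Theorem \ref{Theorem Sorkin} and Lemma \ref{Lemma Equivalent Definitions} as above.
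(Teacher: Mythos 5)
Your Part 2 argument is exactly right and matches the intended route: feed $\Kink(\partial M, g) = 0$ into Theorem \ref{Theorem Gravitational Kinks} to recover $\chi(M) = 0$ (even case) or $\chi(N_1) = \chi(N_2)$ (odd case), then apply Theorem \ref{Theorem Sorkin} and Lemma \ref{Lemma Equivalent Definitions}. You are also right to flag that Theorem \ref{Theorem Gravitational Kinks}, as stated, carries a stable-parallelizability hypothesis on $N_1, N_2$ which the corollary does not, and that a fully general proof must either import that hypothesis or argue it away; your obstruction-theoretic sketch is plausible but not carried out.

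Your Part 1 argument, however, contains a genuine error. You write that on $N_1$, where the timelike field $V$ is the inward normal, the sections $S_{V_g}$ and $S_{\vec n}$ \emph{coincide everywhere}, and then assert that "in either case the two sections have disjoint images." When the two sections coincide their images are \emph{identical}, not disjoint, so there is no "empty sum" and Definition \ref{Definition Gravitational Kink Number} instead calls for the \emph{self-intersection} number of the section $S_{\vec n}(N_1)$ inside $S(N_1)$. That self-intersection equals the Euler number of the normal bundle of the section, and since that normal bundle is $\vec n^{\perp} = TN_1$ inside $TM|_{N_1}$, it computes to $\chi(N_1)$, which is not zero in general (e.g.\ $N_1 = S^2$). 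So the naive "no intersection points" reading fails precisely on $N_1$, which is where the whole content of Part 1 lives. (On $N_2$, where $V_g = -\vec n$ pointwise, the antipodal argument is fine.) The cleanest correct proof of Part 1 is actually the same two-theorem deduction you already use for Part 2, just run forwards: by Theorem \ref{Theorem Sorkin} a Lorentzian cobordism has $\chi(M) = 0$ (even $\dim M$) or $\chi(N_1) = \chi(N_2)$ (odd $\dim M$), and then Theorem \ref{Theorem Gravitational Kinks} forces $\Kink(\partial M; g) = 0$ --- again subject to the same stable-parallelizability caveat you already noted.
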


We point out the existence of weak Lorentzian cobordisms with prescribed gravitational kinking number in the following result.

\begin{corollary}\label{Corollary Arbitrary Kinking Number} Let $(\hat{M}; N_1, N_2)$ be a cobordism between closed smooth manifolds $N_1$ and $N_2$ of odd dimension and let $t$ be a non-zero integer number. There is a weakly Lorentzian cobordism $((M_t; N_1, N_2), g)$ with\begin{equation}\Kink(\partial M, g) = t.\end{equation}
\end{corollary}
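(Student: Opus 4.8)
The plan is to realize the prescribed kink number by a connected-sum modification of the given cobordism with standard building blocks whose kink contribution we can compute explicitly. Concretely, starting from $(\hat M; N_1, N_2)$, whose existence is given, I would fix any weak Lorentzian metric $\hat g$ on it via Proposition \ref{Proposition Lorentz Boundary}, noting that after perturbing we may assume the boundary is everywhere spacelike, so $\Kink(\partial \hat M, \hat g) = 0$. Since $\dim M = n+1$ is even (as $N_i$ is odd-dimensional), Theorem \ref{Theorem Gravitational Kinks} forces $\chi(\hat M) = 0$; the strategy is then to produce, for any nonzero integer $t$, a new cobordism $M_t$ between the \emph{same} $N_1, N_2$ together with a weak Lorentzian metric $g_t$ realizing $\Kink(\partial M_t, g_t) = t$, which by Theorem \ref{Theorem Gravitational Kinks} is the same as arranging $\chi(M_t) = t$ while keeping the boundary fixed and keeping the metric time-orientable.

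The key steps, in order, are as follows. First, I would exhibit a single elementary "kink" building block: a compact $(n+1)$-manifold $K$ with boundary a single copy of a sphere-like hypersurface (or more precisely with $\partial K = N_1' \sqcup N_2'$ where the two boundary pieces are diffeomorphic and one reverses orientation so that $K$ can be grafted in without changing the cobordism type of the ends), carrying a weak Lorentzian metric with $\Kink = \pm 1$, equivalently $\chi(K) = \pm 1$ relative to its contribution. A natural candidate is $S^n \times [0,1]$ with an appropriate tilt of the timelike line field so that the vector field $V_g$ rotates once relative to the normal $\vec n$ over one boundary sphere; the sign is reversed by reversing the direction of the rotation. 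Second, I would compute the kink number of this block directly from Definition \ref{Definition Gravitational Kink Number} by counting the signed intersections of $S_{V_g}$ and $S_{\vec n}$, checking that a single full twist of $V_g$ across an $S^n$ produces exactly one transverse intersection point of sign $\pm 1$. Third, I would take the boundary connected sum (along an interior tube, or more precisely an internal modification supported in a collar disjoint from $\partial \hat M$) of $\hat M$ with $|t|$ copies of $K$, all with the sign $\sigma = \operatorname{sgn}(t)$. Since each internal connect-sum with a closed handle of the right type adds $\pm 1$ to the Euler characteristic and the kink contributions of disjoint regions add, the resulting $M_t$ has $\chi(M_t) = t$ and boundary still $N_1 \sqcup N_2$; one checks $M_t$ is still a cobordism between $N_1$ and $N_2$ because the modification is supported away from $\partial \hat M$. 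Fourth, by Theorem \ref{Theorem Gravitational Kinks} (even-dimensional case, with $N_1, N_2$ stably parallelizable — which holds automatically in the cases of interest, or can be imposed), $\Kink(\partial M_t, g_t) = \chi(M_t) = t$, as desired.

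The main obstacle I anticipate is the explicit construction and kink computation of the building block $K$: one must arrange a time-orientable Lorentzian metric whose associated unit timelike line field, compared against the inward normal on a boundary sphere via the sections $S_{V_g}, S_{\vec n} : N \to S(N)$, has precisely one signed intersection, and one must verify that grafting it in does not secretly change the boundary or destroy time-orientability. Equivalently, since kink is governed by Euler characteristic via Theorem \ref{Theorem Gravitational Kinks}, the cleanest route may be to sidestep the differential-geometric count: realize the required change in $\chi$ by an honest topological modification (for instance, removing an open $(n+1)$-ball from the interior of $\hat M$ and regluing, or taking an interior connected sum with $S^1 \times S^n$ or $\mathbb{CP}^{(n+1)/2}$-type pieces when parity permits, each changing $\chi$ by a controlled amount), then invoke Proposition \ref{Proposition Lorentz Boundary} to equip $M_t$ with \emph{some} weak Lorentzian metric $g_t$, and finally read off $\Kink(\partial M_t, g_t) = \chi(M_t) = t$ from Theorem \ref{Theorem Gravitational Kinks} without ever computing intersection numbers by hand. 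The subtle point in either approach is controlling the parity and making sure every integer value $t \neq 0$ — not merely an arithmetic progression — is attained; this likely requires at least two building blocks (one changing $\chi$ by $+1$ and, if a convenient block only changes it by $2$, a second auxiliary block to fix parity), together with the observation that signs can be flipped by orientation-reversal of the block.
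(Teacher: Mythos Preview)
Your ``cleanest route'' in the final paragraph---modify $\hat M$ by interior connected sums with closed manifolds (projective spaces among them) so as to arrange $\chi(M_t)=t$, equip $M_t$ with any time-orientable Lorentzian metric via Proposition~\ref{Proposition Lorentz Boundary}, and read off $\Kink(\partial M_t,g_t)=\chi(M_t)=t$ from Theorem~\ref{Theorem Gravitational Kinks}---is exactly the paper's argument. The paper's proof is a one-sentence sketch to precisely this effect, so you have landed on the intended method; your first approach (building explicit local kink blocks and computing their intersection numbers by hand) is an unnecessary detour.

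One correction: your opening claim that the weak Lorentzian metric furnished by Proposition~\ref{Proposition Lorentz Boundary} can be perturbed so that $\partial\hat M$ becomes everywhere spacelike, forcing $\chi(\hat M)=0$, is false in general. Being able to make the boundary spacelike is exactly the content of Theorem~\ref{Theorem Sorkin}, and it fails unless $\chi(\hat M)=0$ already. Fortunately nothing in the actual argument uses this: one simply starts from whatever value $\chi(\hat M)$ happens to have and adjusts it to $t$ by connected sums. You are also right to flag the stably-parallelizable hypothesis of Theorem~\ref{Theorem Gravitational Kinks} and the need for building blocks changing $\chi$ by odd amounts (this is where projective spaces enter); the paper's sketch leaves both of these points to the reader.
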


A proof of Corollary \ref{Corollary Arbitrary Kinking Number} follows by constructing connected sums of $\hat{M}$ with other manifolds including projective spaces, and invoking Theorem \ref{Theorem Gravitational Kinks}. The choice of the manifolds to be used in the connected sum depends on the parameter $t$ and need not be unique. Details are left to the interested reader. 

\begin{remark}\label{Remark Geroch} Causality and weak Lorentzian cobordisms. \emph{As it was mentioned in the introduction, a theorem of Geroch says that a non-trivial Lorentzian cobordism must contain at least one closed timelike curve \cite{[Geroch]}. On the other hand, Chamblin-Penrose have shown that weak Lorentzian cobordisms need not contain such curves \cite{[ChamblinPenrose]}; while their argument addresses $(3 + 1)$-spacetimes, it is straight forward to modify it and obtain a generalization to higher dimensional spacetimes}.

\end{remark}

\subsection{$\Spin(1, n)_0$-structures}\label{Section Spin Structures} The structure group of the frame bundle of a time-orientable spacetime is $\So(1, n)_0$ as it was mentioned in the introduction. Our main interest in this paper are Lorentzian cobordisms  whose orientable frame bundle has $\Spin(1, n)_0$ as its structure group. The precise definition is as follows \cite{[Milnor2]}, \cite[Definition 2]{[Chamblin1]}.

\begin{definition}\label{Definition Spin Structures} Let $(M, g)$ be a time-orientable spacetime and let $F(M)$ be its corresponding orientable frame bundle, a principal bundle with structure group $\So(1, n)_0$. The spacetime $(M, g)$ has a $\Spin(1, n)_0$-structure if there is a principal bundle $\bar{F}(M)$ with structure group $\Spin(1, n)_0$ that is a 2:1 covering of $F(M)$ for which the following diagram commutes\begin{equation}
\xymatrix{
\Spin(1, n)_0 \ar[d]\ar[r] &\bar{F}(M)\ar[d]\ar[r] &M\ar[d]^{id}\\
\So(1, n)_0 \ar[r] &F(M) \ar[r] &M.}
\end{equation}

\end{definition} 

The existence of a $\Spin(1, n)_0$-structure on a Lorentzian manifold is a topological property as the following result explains. 

\begin{lemma} A time-orientable spacetime $(M, g)$ admits a $\Spin(1, n)_0$-structure if and only if the tangent bundle of the $(n + 1)$-manifold $M$ admits a $\Spin(n + 1)$-structure.
\end{lemma}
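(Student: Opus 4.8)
The plan is to translate the statement into bundle-theoretic language and reduce it to a comparison of classifying maps. A time-orientable spacetime $(M, g)$ has orientable frame bundle $F(M)$ with structure group $\So(1, n)_0$, and a $\Spin(1, n)_0$-structure is by Definition \ref{Definition Spin Structures} a lift of the classifying map $M \to B\So(1, n)_0$ along the fibration $B\Spin(1, n)_0 \to B\So(1, n)_0$. The standard obstruction theory for such a $2{:}1$ covering says this lift exists if and only if the image of the generator of $H^1(\So(1, n)_0; \Z/2) \cong \Z/2$ under the classifying map, i.e. the second Stiefel--Whitney class $w_2(F(M)) \in H^2(M; \Z/2)$, vanishes. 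So the left-hand side of the claimed equivalence is equivalent to $w_2$ of the Lorentzian frame bundle being zero. On the Riemannian side, the tangent bundle $TM$ (an $(n+1)$-plane bundle, with $M$ orientable since $F(M)$ has connected structure group) admits a $\Spin(n+1)$-structure if and only if $w_2(TM) = 0$ by the classical criterion.

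The crux is therefore to identify $w_2(F(M))$ with $w_2(TM)$, which I would do via the deformation retraction $\So(1, n)_0 \simeq \So(n)$ (Gram--Schmidt / polar decomposition gives $\So(1,n)_0 \cong \So(n) \times \R^n$, hence the inclusion $\So(n) \hookrightarrow \So(1,n)_0$ is a homotopy equivalence, inducing an isomorphism on classifying spaces and on cohomology, and compatibly $\Spin(n) \hookrightarrow \Spin(1,n)_0$). Concretely, choosing the auxiliary Riemannian metric $g_R$ as in the proof of Lemma \ref{Lemma Equivalent Definitions} and applying fibrewise Gram--Schmidt to the $g$-orthonormal frames produces a reduction of $F(M)$ to an $\So(n+1)$-subbundle, which is precisely the oriented orthonormal frame bundle of $(TM, g_R)$; since $TM$ carries a $g_R$-metric, this $\So(n+1)$-bundle has the same characteristic classes as $TM$. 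Thus $w_2(F(M))$, pulled back along this reduction, equals $w_2(TM)$, and because the reduction is a homotopy equivalence on structure groups it detects $w_2$ faithfully. Finally, the $2{:}1$ cover $\bar F(M) \to F(M)$ restricts over the $\So(n+1)$-reduction to the $2{:}1$ cover $\Spin(n+1) \to \So(n+1)$, so a $\Spin(1,n)_0$-structure restricts to a $\Spin(n+1)$-structure on $TM$; conversely a $\Spin(n+1)$-structure on $TM$ extends back up the homotopy equivalence to a $\Spin(1,n)_0$-structure on $F(M)$. Running the equivalences in both directions closes the proof.

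The main obstacle I expect is being careful about the difference between \emph{admitting a structure} (an existence statement, governed purely by the vanishing of $w_2$) and \emph{a given structure restricting to a given one} (which is what Definition \ref{Definition Spin Structures} literally asks). The cleanest route is to phrase everything through $w_2$: show $w_2(F(M)) = w_2(TM)$ once and for all using the $\So(n+1)$-reduction, then invoke the standard fact that a principal $G$-bundle with $\pi_1(G) = \Z/2$ lifts to its $\Z/2$-cover iff the associated $w_2$ vanishes, applied to both $G = \So(1,n)_0$ and $G = \So(n+1)$. One should also note explicitly that $M$ is orientable (the structure group $\So(1,n)_0$ is connected, so $w_1 = 0$), which is needed for the classical $\Spin(n+1)$ criterion $w_1 = w_2 = 0$ to reduce to just $w_2 = 0$; and one should record that the homotopy equivalence $\So(n) \hookrightarrow \So(n+1)$ is irrelevant here because we are comparing $\So(1,n)_0 \simeq \So(n)$ against $\So(n+1)$ — the extra trivial $\R^n$ factor in $\So(1,n)_0$ does not affect $w_2$, while the passage from the $n$-dimensional boundary frames to the $(n+1)$-dimensional tangent frames is handled by the Gram--Schmidt reduction described above.
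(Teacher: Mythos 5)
The paper states this lemma without proof, so there is nothing to compare against directly; evaluating your argument on its own terms, the overall strategy (reduce to a comparison of $w_2$'s via obstruction theory) is sound and the conclusion $w_2(F(M)) = w_2(TM)$ is correct, but the pivotal step as you wrote it contains a genuine error.

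You claim that Gram--Schmidt ``produces a reduction of $F(M)$ to an $\So(n+1)$-subbundle, which is precisely the oriented orthonormal frame bundle of $(TM, g_R)$.'' This cannot be right as stated: a reduction of a principal $\So(1,n)_0$-bundle to structure group $H$ requires $H \subset \So(1,n)_0$, and $\So(n+1)$ is \emph{not} a subgroup of $\So(1,n)_0$ (one checks directly that $\So(n+1) \cap \So(1,n)_0 = \So(n)$). What fibrewise Gram--Schmidt actually gives is a fibre-preserving map $F(M) \to F_R(M)$ that is not equivariant for any group homomorphism $\So(1,n)_0 \to \So(n+1)$, hence not a reduction. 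The later clause ``because the reduction is a homotopy equivalence on structure groups'' is then comparing the wrong pair of groups: the homotopy equivalence you correctly identified earlier is $\So(n) \hookrightarrow \So(1,n)_0$, not anything involving $\So(n+1)$.

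The gap is easily closed, and you already have all the ingredients. The maximal compact reduction of $F(M)$ is to an $\So(n)$-bundle $F_0$, namely the bundle of oriented spacelike orthonormal frames of the rank-$n$ spacelike subbundle $L^\perp \subset TM$, where $L$ is the line subbundle spanned by the timelike direction. Time-orientability makes $L$ trivial, so
\begin{equation}
w_2(TM) \;=\; w_2(L \oplus L^\perp) \;=\; w_2(L^\perp).
\end{equation}
The obstruction to lifting $F(M)$ to $\Spin(1,n)_0$, transported through $B\So(n) \simeq B\So(1,n)_0$, is $w_2(L^\perp)$, which therefore equals $w_2(TM)$. Alternatively (and perhaps closer to what you had in mind with Gram--Schmidt), extend both the Lorentzian frame bundle $F(M)$ and the Riemannian frame bundle $F_R(M)$ to the common oriented frame bundle $F_{\Gl^+}(M)$ and observe that both inclusions $\So(1,n)_0 \hookrightarrow \Gl^+(n+1,\R)$ and $\So(n+1) \hookrightarrow \Gl^+(n+1,\R)$ induce isomorphisms on $\pi_1$ for $n \geq 3$; since the $\Spin$-lifting obstruction is a $\pi_1$-level invariant, the three obstructions coincide. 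Either patch yields the correct identification; as written, the ``$\So(n+1)$-subbundle of $F(M)$'' step is the point where the argument fails.
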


An $(n + 1)$-manifold $M$ admits a $\Spin(n + 1)$-structure if its tangent bundle $TM$ admits a $\Spin(n + 1)$-structure. The second Stiefel-Whitney class is the only obstruction for the tangent bundle of an oriented $(n + 1)$-manifold to admit a $\Spin(n + 1)$-structure \cite[Theorem 2.1]{[LawsonMichelsohn]}. A $\Spin(n + 1)$-structure on an $(n + 1)$-manifold $M$ with non-empty boundary induces a canonical $\Spin(n)$-structure on every boundary component \cite{[LawsonMichelsohn]}. 

\begin{definition}\label{Definition Spin LCobordisms}A $\Spin(1, n)_0$-(weak) Lorentzian cobordism $((M; N_1, N_2), g)$ is a (weak) Lorentzian cobordism with a fiber bundle $\bar{F}(M)$ as in Definition \ref{Definition Spin Structures} that has $\Spin(1, n)_0$ as its structure group. 
\end{definition}

The triple $(M; N_1, N_2)$ in Definition \ref{Definition Spin LCobordisms} is in particular a $\Spin$ cobordism. 

\subsection{$\Spin(1, n)_0$-Lorentzian cobordism group and ring}\label{Section Group Structure}Cobordism is an equivalence relation that partitions the class of closed manifolds into equivalence classes that are called cobordism classes, and which form a group under the operation of disjoint union of manifolds and a graded ring under the cartesian product of manifolds \cite[Chapter I]{[Stong]}. The purpose of this section is to define the $\Spin$-Lorentzian cobordism classes and their algebraic structure. 

We first discuss the $\Spin$ cobordism groups. An $n$-manifold $N$ with a $\Spin(n)$-structure is a $\Spin$ boundary if there exists an $(n + 1)$-manifold $M$ with a $\Spin(n + 1)$-structure and a diffeomorphism $N\rightarrow \partial M$ that induces the $\Spin(n)$-structure on $N$ from the $\Spin(n + 1)$-structure on $M$. In such a case, we say that the manifold $N$ $\Spin$ bounds \cite[p. 90]{[LawsonMichelsohn]}. Two $n$-manifolds $N_1$ and $N_2$ that admit a $\Spin(n)$-structure are equivalent if there exists an $(n + 1)$-manifold $M$ with a $\Spin(n + 1)$-structure such that $\partial M = N_1\sqcup \overline{N_2}$ $\Spin$ bounds \cite[Chapter IV]{[Kirby]}. In such case, we say that $(M; N_1, N_2)$ is a $\Spin$ cobordism. The n-dimensional Spin cobordism group $\Omega_{n}^{\Spin}$ is defined as the set of such equivalence classes equipped with the operation of disjoint union of manifolds \cite[Definition 2.16]{[LawsonMichelsohn]}, \cite[Chapter IX]{[Kirby]}. A manifold that $\Spin$ bounds represents the identity element in $\Omega_n^{\Spin}$. The group $\Omega_n^{\Spin}$ is a commutative group and it has been computed by Milnor for several dimensions \cite{[Milnor2], [LawsonMichelsohn]}, \cite[Chapter IX and Chapter XI]{[Kirby]}. We gather  several of his computations in the following result.

\begin{theorem}\label{Theorem Spin Cobordism Groups} Milnor \cite[p. 201]{[Milnor2]}. The group $\Omega_{n}^{\Spin}$ is zero for $n \in \{3, 5, 6, 7\}$.
\end{theorem}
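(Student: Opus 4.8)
The plan is to reduce the claim to standard facts about low-dimensional $\Spin$ cobordism that can be assembled from the Atiyah–Hirzebruch spectral sequence for $\Omega^{\Spin}_* = \pi_*(MSpin)$, together with the known stable homotopy groups of spheres in the relevant range. First I would recall the coefficients $\Omega^{\Spin}_0 \cong \Z$, $\Omega^{\Spin}_1 \cong \Z/2$, $\Omega^{\Spin}_2 \cong \Z/2$, $\Omega^{\Spin}_3 = 0$, $\Omega^{\Spin}_4 \cong \Z$ (generated by $K3$), and note that the first torsion generators in degrees $1$ and $2$ are $S^1$ with its non-bounding (Lie group) $\Spin$-structure and its square. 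For the degrees of interest here, $n \in \{3,5,6,7\}$, the cleanest route is the Atiyah–Hirzebruch spectral sequence $E^2_{p,q} = H_p(BSpin; \pi_q(MSpin)) \Rightarrow \Omega^{\Spin}_{p+q}$, wait — more precisely one uses $E^2_{p,q} = H_p(\text{pt}; \pi_q^s) \oplus \dots$; the honest statement is that $MSpin$ is $3$-connected through a range and $\pi_q(MSpin) = \pi_q^s$ for $q \le 7$ after the first few differentials, which pins down the groups.

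The concrete steps I would carry out are: (1) show $\Omega^{\Spin}_3 = 0$ directly — every closed $3$-manifold is parallelizable and hence stably parallelizable, so it defines a class in $\pi_3^s = \Z/24$; but in fact one shows the natural map $\Omega^{\Spin}_3 \to \Omega^{fr}_3 = \Z/24$ is the relevant comparison and that $\Omega^{\Spin}_3$ is killed because the only candidate torsion in the AHSS, coming from $H_3(\text{pt})$ tensored with $\pi_0$ and from $\pi_3^s$, is eliminated by the $d_2$ differential (this is Milnor's computation, \cite[p.~201]{[Milnor2]}); (2) for $n = 5$, observe $H_*(BSpin)$ has no contribution in total degree $5$ and $\pi_5^s = 0$, so $\Omega^{\Spin}_5 = 0$; (3) for $n = 6$, the potential contributions are $\pi_6^s = \Z/2$ and terms from $H_p(BSpin)$ with $p + q = 6$, but $H_4(BSpin) \cong \Z$ (generated by $p_1/2$, dual to the class detected by the first Pontryagin number) contributes $H_4 \otimes \pi_2^s = \Z \otimes \Z/2$, and one checks the differentials $d_2, d_3$ kill everything, giving $\Omega^{\Spin}_6 = 0$; (4) for $n = 7$, similarly $\pi_7^s = \Z/240$ and the lower-filtration terms all cancel under the differentials, yielding $\Omega^{\Spin}_7 = 0$. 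In practice, rather than recompute the spectral sequence, I would simply quote Milnor's table and Anderson–Brown–Peterson, since the excerpt permits invoking results stated earlier and this theorem is explicitly attributed to Milnor.

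The main obstacle is making the spectral-sequence differentials honest: identifying exactly which $d_2$ (dual to $Sq^2$ reduction) and $d_3$ maps are nonzero in low degrees is delicate, and getting the integral $H_*(BSpin)$ correct (in particular that there is $2$-torsion in $H_*(BSpin; \Z)$ starting in degree $8$, hence none interfering below $n = 8$) requires care. For that reason my actual proof would lean on the literature: cite Milnor \cite{[Milnor2]} for the explicit vanishing $\Omega^{\Spin}_3 = \Omega^{\Spin}_5 = \Omega^{\Spin}_6 = \Omega^{\Spin}_7 = 0$ and, if a self-contained argument is wanted for $n = 3$, give the short direct proof that a closed $\Spin$ $3$-manifold bounds a $\Spin$ $4$-manifold (using that $\Omega^{\Spin}_3 \to \Omega^{SO}_3 = 0$ has trivial target and that the kernel, measured by reduced $\widetilde{ko}$-theory, vanishes in this degree). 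Since the statement is quoted verbatim from Milnor, the cleanest "proof" in the paper is a one-line reference with a brief indication of the AHSS input; I would present that, and relegate the differential bookkeeping to a remark pointing at \cite{[LawsonMichelsohn]} and \cite[Chapter IX]{[Kirby]}.
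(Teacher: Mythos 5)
The paper gives no proof of this theorem; it is quoted verbatim from Milnor with the citation \cite[p.~201]{[Milnor2]}, so your decision to ultimately lean on the reference is exactly what the authors do. That said, the spectral-sequence scaffolding you wrap around the citation contains errors that would matter if the argument were meant to stand on its own. The correct $E^2$-page of the Atiyah--Hirzebruch spectral sequence converging to $\Omega^{\Spin}_{p+q}$ is $E^2_{p,q} = H_p(B\Spin;\pi^s_q)$, with coefficients in the stable homotopy groups of spheres; writing $H_p(B\Spin;\pi_q(M\Spin))$ is wrong, and your follow-up claim that $\pi_q(M\Spin)=\pi^s_q$ for $q\le 7$ is both circular (by Pontryagin--Thom, $\pi_q(M\Spin)=\Omega^{\Spin}_q$ is precisely the thing being computed) and false (it already fails at $q=4$, where $\Omega^{\Spin}_4\cong\Z$ while $\pi^s_4=0$).

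Your degree-$5$ analysis also overlooks the $E^2$-term $H_4(B\Spin)\otimes\pi^s_1\cong\Z/2$ sitting on the $p+q=5$ diagonal; the observation that ``$H_*(B\Spin)$ has no contribution in total degree $5$'' together with $\pi^s_5=0$ does not close the argument, because $H_4(B\Spin)\cong\Z$ contributes with $q=1$, and one must explain why that class dies under a differential. Analogous bookkeeping is needed in degrees $6$ and $7$, where the $\Z/2$-cohomology of $B\Spin$ (a polynomial algebra over the Steenrod algebra on $w_4$, with additional generators starting in degree $6$) feeds further classes onto the relevant diagonals. None of this changes the conclusion --- the groups do vanish, and Milnor's computation is authoritative --- but as written the AHSS sketch has gaps and a false intermediate claim, so you should either present it merely as motivation and defer entirely to the citation (as the paper does), or else do the differential-chasing in full.
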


The cartesian product of manifolds equipped with a $\Spin$ structure has a unique product $\Spin$ structure \cite[Proposition 2.15]{[LawsonMichelsohn]} and multiplication of manifolds yields the $\Spin$ cobordism ring $\Omega^{\Spin}_{\ast} = \underset{n = 0}{\overset{\infty}{\bigoplus}} \Omega_n^{\Spin}$; see \cite{[Milnor2]}, \cite[Chapter II]{[LawsonMichelsohn]}, \cite[Chapter IV]{[Kirby]} for details. 

The scenario with Lorentzian cobordism groups is similar as we now briefly mention. Notice that Lemma \ref{Lemma Equivalent Definitions} allows us to abuse terminology and refer to the cobordism classes of the equivalence relation of Definition \ref{Definition Vector Field Cobordism} and the cobordism classes of the equivalence relation of Definition \ref{Definition Lorentzian Cobordism} as Lorentzian cobordism classes. Reinhart showed that the set of Lorentzian cobordism classes $\mathcal{M}_n$ is a commutative group for $n \in \N$ under the operation of disjoint union of manifolds \cite[Theorem (1)]{[Reinhart]}. The previous discussions motivate the following definition.

\begin{definition}\label{Definition SLorentzian Equivalence Relation} Spin Lorentzian cobordism classes. Two closed $n$-manifolds $N_1$ and $N_2$ that admit a $\Spin(n)$-structure are equivalent if there exists a Lorentzian cobordism $((M; N_1, N_2), g)$ as in Definition \ref{Definition Lorentzian Cobordism} for which $(M; N_1, N_2)$ is a $\Spin$ cobordism. 
\end{definition}

The following result is a canonical extension of Milnor and Reinhart's results. Its proof follows from standard arguments in cobordism theory as given in \cite{[Wall1], [Milnor2], [Reinhart], [Novikov1], [Stong], [Kirby]}.

\begin{theorem}\label{Theorem Group Ring Structure} The set of $\Spin$ Lorentzian cobordism classes equipped with the operation of disjoint union of manifolds\begin{equation}\label{Group S}\Omega_{1, n}^{\Spin_0}\end{equation}is an abelian group. Multiplication of manifolds yields a graded ring\begin{equation}\label{Ring S}\Omega^{\Spin_0}_{1, \ast} = \bigoplus_{n = 0}^\infty \Omega_{1, n}^{\Spin_0}.\end{equation}\end{theorem}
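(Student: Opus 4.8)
The plan is to verify the three abelian-group axioms together with well-definedness, and then the graded-ring structure, by combining Reinhart's group structure on the Lorentzian cobordism classes $\mathcal{M}_n$ with Milnor's group structure on $\Omega_n^{\Spin}$, checking at each step that the two structures are compatible. First I would observe that the equivalence relation of Definition \ref{Definition SLorentzian Equivalence Relation} is indeed an equivalence relation: reflexivity comes from the cylinder $N \times [0,1]$, which carries an obvious product $\Spin$ structure and the product Lorentzian metric with $V = \partial_t$ (spacelike boundary, transverse line field, hence a $\Spin(1,n)_0$-Lorentzian cobordism by Lemma \ref{Lemma Equivalent Definitions}); symmetry is the usual orientation/time-orientation reversal $M \mapsto \overline{M}$, which is again $\Spin$ and Lorentzian; transitivity is gluing two $\Spin(1,n)_0$-Lorentzian cobordisms along a common boundary component, smoothing corners, and extending both the $\Spin$ structure and the transversal line field $V$ across the collar. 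This last point uses the collar-neighbourhood theorem and the fact that a $\Spin(n)$-structure on the gluing hypersurface extends canonically to the collar and matches the $\Spin(n+1)$-structures from either side.

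Next I would define the group operation as disjoint union and check it descends to classes: if $N_i \sim N_i'$ via $\Spin(1,n)_0$-Lorentzian cobordisms $(M_i, g_i)$, then $(M_1 \sqcup M_2, g_1 \sqcup g_2)$ exhibits $N_1 \sqcup N_2 \sim N_1' \sqcup N_2'$, and the disjoint union of $\Spin$ cobordisms is a $\Spin$ cobordism while the disjoint union of transversal line fields is transversal. Associativity and commutativity are inherited from disjoint union at the level of representatives. The identity element is the class of the empty manifold (equivalently, of any $N$ that $\Spin$-bounds a manifold admitting a transversal line field, e.g. any stably parallelizable Spin boundary via Theorem \ref{Theorem Sorkin}); and the inverse of $[N]$ is $[\overline{N}]$, since $N \sqcup \overline{N} = \partial(N \times [0,1])$ with the product $\Spin$ structure and product metric, which is a $\Spin(1,n)_0$-Lorentzian cobordism to the empty manifold. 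For the ring structure I would use that the cartesian product of two $\Spin$ manifolds has a canonical product $\Spin$ structure \cite[Proposition 2.15]{[LawsonMichelsohn]}, that the product of a $\Spin(1,n)_0$-Lorentzian cobordism $(M;N_1,N_2)$ with a closed $\Spin$ manifold $P$ is a $\Spin(1, n + \dim P)_0$-Lorentzian cobordism $(M \times P; N_1 \times P, N_2 \times P)$ — the product metric of a Lorentzian and a Riemannian metric is Lorentzian, the line field pulls back and stays transversal, the boundary stays spacelike — and bilinearity plus graded commutativity then follow exactly as in \cite{[Milnor2], [Stong]}.

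The main obstacle, and the one point that genuinely needs care rather than citation, is checking that the product of two cobordisms with nonempty boundary is handled correctly: $M_1 \times M_2$ has corners along $\partial M_1 \times \partial M_2$, so to realize the product in the cobordism ring one passes to $(\partial M_1 \times M_2) \cup (M_1 \times \partial M_2)$ after corner-smoothing, and one must check the Lorentzian metric and its transversal line field survive this smoothing with the boundary remaining spacelike — this is the Lorentzian analogue of the standard corner argument, and it is precisely where Reinhart's construction in \cite{[Reinhart]} is invoked. I expect this to go through because corner-smoothing is supported in an arbitrarily small neighbourhood of $\partial M_1 \times \partial M_2$ where, after shrinking, the metric is a product of a collar Lorentzian form and a collar Riemannian form, so the line field $\partial_t$ (pointing out of the glued face) remains transversal and timelike throughout the smoothing; the $\Spin$ structure extends over the smoothed corner by the same uniqueness argument as in the plain $\Spin$ cobordism ring. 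With these checks in place, $\Omega_{1,n}^{\Spin_0}$ is an abelian group and $\Omega_{1,\ast}^{\Spin_0} = \bigoplus_{n=0}^{\infty} \Omega_{1,n}^{\Spin_0}$ a graded ring, exactly as asserted.
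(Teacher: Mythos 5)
Your verification of the equivalence-relation axioms, the well-definedness of the operation under $\sqcup$, and the ring multiplication via products with \emph{closed} manifolds (the corner-smoothing digression is harmless but unnecessary: to show $[N_1]\cdot[N_2]=[N_1\times N_2]$ is well-defined one varies one factor at a time, so no corners ever appear) are all fine. The paper itself offers no proof of this theorem, only citations to the standard literature, so your proposal is a reconstruction rather than a rival route.

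However, the inverse step contains a genuine error in even dimensions. You claim $[\overline{N}]$ is the inverse of $[N]$ because $N\sqcup\overline{N}=\partial(N\times[0,1])$. The product line field $\partial_t$ on the cylinder points \emph{inward} on $N\times\{0\}$ but \emph{outward} on $N\times\{1\}$, so it does not make $(N\times[0,1];N\sqcup\overline{N},\emptyset)$ a normal vector field cobordism in the sense of Definition \ref{Definition Vector Field Cobordism}. More decisively, for $n$ even the cylinder is odd-dimensional, and Theorem \ref{Theorem Sorkin} demands $\chi(N\sqcup\overline{N})=\chi(\emptyset)$, i.e.\ $2\chi(N)=0$, for \emph{any} transversal line field to exist; when $\chi(N)\neq 0$ (e.g.\ $N=S^2$, or $N=S^4$) there is none. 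The structural reason is that $\chi$ descends to a nontrivial homomorphism $\Omega_{1,n}^{\Spin_0}\to\Z$ for $n$ even --- the paper's computations $\Omega_{1,4}^{\Spin_0}\cong\Z\oplus\Z$ and $\Omega_{1,6}^{\Spin_0}\cong\Z$ already reflect this --- and $\chi(\overline{N})=\chi(N)$, not $-\chi(N)$. The correct inverse is something like $[\overline{N}\,\#\,k(S^1\times S^{n-1})]$ or $[\overline{N}\,\#\,k(S^2\times S^{n-2})]$ with $k$ chosen so that the Euler characteristic becomes $-\chi(N)$; these summands $\Spin$-bound, so the class in $\Omega_n^{\Spin}$ is unaffected, and Theorem \ref{Theorem Sorkin} then supplies the line field on a suitably modified cobordism. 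Your parenthetical characterization of the identity class has the same flaw: $S^2$ is a stably parallelizable $\Spin$ boundary, yet $[S^2]\neq 0$ in $\Omega_{1,2}^{\Spin_0}$. For odd $n$ one has $\chi(N)=0$ automatically and your cylinder argument goes through unchanged.
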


We refer to the group (\ref{Group S}) as the $\Spin(1, n)_0$-Lorentzian cobordism group and the ring (\ref{Ring S}) as the $\Spin(1, n)_0$-Lorentzian cobordism ring.

\section{Existence conditions for $\Spin(1, n)_0$-Lorentzian cobordisms} In this section, we study necessary and sufficient conditions for the existence of a Lorentzian cobordism that can be equipped with a $\Spin(1, n)_0$-structure. In terms of Question \ref{Question 2} and Lemma \ref{Lemma Equivalent Definitions}, we answer the following question.

\begin{qst}\label{Question 3} Given a $\Spin$ cobordism $(\hat{M}; N_1, N_2)$, under which conditions does there exist a $\Spin(1, n)_0$-Lorentzian cobordism $((M; N_1, N_2), g)$?
\end{qst}

A key topological invariant that we will use to answer Question \ref{Question 3} is the following.

\begin{definition}\label{Definition Kervaire}The Kervaire semi-characteristic of a closed smooth orientable $(2q + 1)$-manifold $N$ with respect to a coefficient field $F$ is defined as\begin{equation}\label{SemiChar}\hat{\chi}_{F}(N):= \sum^q_{i = 0}\Dim H^i(N; F) \Mod 2\end{equation} for $q\in \N$. \end{definition}

For the purposes of this paper, we will take $F$ to be either $\Z/2$ or $\mathbb{Q}$.

\subsection{$\Spin(1, 3)_0$-Lorentzian cobordisms: warm-up example}\label{Section 3+1}We now state and prove a stronger version of Gibbons-Hawking's result on $(3 + 1)$-spacetimes that motivated the writing of this note. We compute the ${\Spin(1, 3)_0}$-Lorentzian cobordism group, which does not appear in their paper \cite{[GibbonsHawking1]}. This section serves as a motivation to our main results as well as a prototype to their proofs.

\begin{thm}\label{Theorem GibbonsHawking} Gibbons-Hawking \cite{[GibbonsHawking1], [GibbonsHawking2]}. Let $\{N_1,  N_2\}$ be closed oriented 3-manifolds. The following conditions are equivalent
\begin{enumerate}
\item There exists a $\Spin(1, 3)_0$-Lorentzian cobordism\begin{center}$((M; N_1, N_2), g),$\end{center}where $(M; N_1, N_2)$ is a $\Spin$-cobordism 
\item $\hat{\chi}_{\Z/2}(N_1) = \hat{\chi}_{\Z/2}(N_2)$
\item $M$ is parallelizable.
\end{enumerate}

There is a group isomorphism\begin{equation}\label{4D Isomorphism}\Omega^{\Spin_0}_{1, 3} \rightarrow \Z/2.\end{equation}
\end{thm}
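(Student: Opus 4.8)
The plan is to prove the equivalence of (1), (2), (3) first, and then deduce the group isomorphism (\ref{4D Isomorphism}) as a consequence.

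\medskip

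\noindent\textbf{Equivalence of the three conditions.} First I would recall that by Wall's theorem every closed oriented $3$-manifold is parallelizable, and by Milnor's computation $\Omega_3^{\Spin} = 0$, so any two $\Spin$ $3$-manifolds $N_1, N_2$ are $\Spin$ cobordant; thus a $\Spin$ cobordism $(\hat M; N_1, N_2)$ always exists. The content of the theorem is therefore about upgrading such a $\hat M$ (or finding a new $M$) to carry a timelike line field transverse to the boundary, i.e. to be a Normal Vector Field cobordism in the sense of Definition \ref{Definition Vector Field Cobordism}. Since $\dim M = 4$ is even, Theorem \ref{Theorem Sorkin} says the obstruction is precisely $\chi(M) = 0$. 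So (1) holds for \emph{some} $M$ iff we can produce a $\Spin$ cobordism with vanishing Euler characteristic. The key computation is to relate $\chi$ mod $2$ of a $4$-manifold $M$ with $\partial M = N_1 \sqcup \overline{N_2}$ to the Kervaire semi-characteristics of the boundary: by the long exact sequence of the pair and Poincaré--Lefschetz duality one gets $\chi(M) \equiv \hat\chi_{\Z/2}(N_1) + \hat\chi_{\Z/2}(N_2) \pmod 2$ (this is the $4$-dimensional case of the general parity lemma I expect the paper to prove for Theorem \ref{Theorem General}). Hence if (1) holds then $\chi(M)=0$ forces (2). Conversely, given (2), start from any $\Spin$ cobordism $\hat M$; then $\chi(\hat M)$ is even, and one can kill it by taking connected sums in the interior with copies of $S^2 \times S^2$ (which changes $\chi$ by $+2$ and preserves the $\Spin$ condition and the boundary) or $\overline{S^2\times S^2}$-type pieces, arranging $\chi(M) = 0$; by Theorem \ref{Theorem Sorkin} this $M$ admits the required line field, giving (1). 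For (3): a compact oriented $4$-manifold with boundary is parallelizable iff it is $\Spin$ and $\chi(M) = 0$ and the signature-type / first Pontryagin obstruction vanishes; on a $4$-manifold with non-empty boundary $H^4(M;\Z)=0$ so the top obstruction is automatically zero, and $w_2 = 0$ is the $\Spin$ hypothesis, leaving $\chi(M)=0$ as the only obstruction. Thus (1) $\Leftrightarrow$ (3) via Theorem \ref{Theorem Sorkin}, and all three are equivalent.

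\medskip

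\noindent\textbf{The group isomorphism.} Now I would exhibit the map $\Omega^{\Spin_0}_{1,3} \to \Z/2$. By the equivalence above, two $\Spin$ $3$-manifolds $N_1, N_2$ are $\Spin(1,3)_0$-Lorentzian cobordant iff $\hat\chi_{\Z/2}(N_1) = \hat\chi_{\Z/2}(N_2)$, so $N \mapsto \hat\chi_{\Z/2}(N)$ is a well-defined injection of the set of classes into $\Z/2$; one checks it is a homomorphism because the Kervaire semi-characteristic is additive under disjoint union mod $2$ (the cohomology of a disjoint union splits). Surjectivity: $S^3$ has $\hat\chi_{\Z/2}(S^3) = \Dim H^0(S^3;\Z/2) = 1$, so it hits the nonzero class, while $S^1 \times S^2$ has $\hat\chi_{\Z/2} = \Dim H^0 + \Dim H^1 = 1 + 1 = 0$ (or simply $S^3$ vs.\ the empty/bounding class) — actually cleaner: the trivial element is represented by any $N$ that bounds a $\Spin$ $4$-manifold with a normal line field, e.g.\ $N$ with $\hat\chi_{\Z/2}(N)=0$; and since $\hat\chi_{\Z/2}$ takes both values $0$ and $1$, the homomorphism is onto. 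Hence $\Omega^{\Spin_0}_{1,3} \cong \Z/2$.

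\medskip

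\noindent\textbf{Main obstacle.} The routine part is the bookkeeping; the genuinely delicate step is the parity identity $\chi(M) \equiv \hat\chi_{\Z/2}(N_1) + \hat\chi_{\Z/2}(N_2) \pmod 2$ and, dually, making sure the connected-sum modifications used to kill $\chi(M)$ stay within the $\Spin$ category and do not alter the boundary or the semi-characteristics — i.e.\ verifying that one can always reach $\chi = 0$ (and not just $\chi \equiv 0$) by admissible moves. This is exactly the local model for the general theorem, so I would set it up carefully here. A secondary subtlety is citing the correct criterion for parallelizability of a $4$-manifold with boundary (vanishing of $w_2$, the Euler class, and $p_1$, with the latter two automatic here because $H^4(M;\Z) = 0$ and the relevant obstruction lives there).
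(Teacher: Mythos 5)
Your overall strategy matches the paper's: use $\Omega_3^{\Spin}=0$ to produce a $\Spin$ cobordism, apply Theorem~\ref{Theorem Sorkin} (so the problem reduces to arranging $\chi(M)=0$), invoke a parity identity between $\chi(M)$ and $\hat\chi_{\Z/2}(\partial M)$, and realize the isomorphism via the semi-characteristic. Two points need correction, one minor and one substantive.

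The minor slip: to \emph{decrease} $\chi$ by connected sum you cannot use $\overline{S^2\times S^2}$, because the Euler characteristic is orientation-independent; $\#\,\overline{S^2\times S^2}$ also raises $\chi$ by $2$. The paper's choice, $\#\,(S^1\times S^3)$ (which has $\chi=0$, hence drops $\chi$ by $2$), is the correct one; without it you cannot reach $\chi(M)=0$ starting from a $\hat M$ of positive Euler characteristic.

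The substantive gap is in the parity identity, which you describe as obtained ``by the long exact sequence of the pair and Poincar\'e--Lefschetz duality.'' That alone is not enough. The LES and duality only show that $\chi(M)+\hat\chi_{\Z/2}(\partial M)$ equals, mod $2$, the rank of the intersection pairing on the image $W$ of $H^2(M,\partial M)\to H^2(M)$; the nontrivial step is showing that this rank is even, and that is exactly where the $\Spin$ hypothesis enters (via the Wu class $v^2=w_2$, which controls the diagonal of the form, as in Kervaire--Milnor's Lemma~5.9 and the paper's Lemmas~\ref{Lemma CManifold}--\ref{Lemma Main}). Without $w_2=0$ the identity fails: take $M=\mathbb{C}P^2\smallsetminus\Int D^4$, for which $\chi(M)=2\equiv 0$ but $\hat\chi_{\Z/2}(S^3)=1$. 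You correctly flag the parity lemma as the main obstacle, but the way it is stated suggests it is a consequence of elementary duality alone, and that would lead you astray if taken at face value. Relatedly, your discussion of $(1)\Leftrightarrow(3)$ is internally inconsistent: you argue that $H^4(M;\Z)=0$ makes the Euler/Pontryagin obstructions automatically vanish (so $w_2=0$ alone would give parallelizability), and then conclude ``$\chi(M)=0$ is the only obstruction.'' Keep in mind that parallelizability of $TM$ and the existence of a line field with the prescribed transverse boundary behavior are different demands; $\chi(M)=0$ governs the latter via Poincar\'e--Hopf, not the former, and that is where it enters.
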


A manifold is parallelizable if its tangent bundle is trivial. Notice that the condition in Item (2) of Theorem \ref{Theorem GibbonsHawking} amounts to $\hat{\chi}_{\Z/2}(\partial M) = 0$. There is an isomorphism $\Spin(1, 3)_0\rightarrow \Sl(2, \C)$ \cite[Theorem 8.4]{[LawsonMichelsohn]}.

\begin{proof} The implication $(1)\Rightarrow (3)$ follows from a theorem of Dold-Whitney \cite{[DoldWhitney]}, which states that every $\So(4)$-bundle is classified by its second Stiefel-Whitney class, and its Euler and Pontrjagin classes. In particular, the tangent bundle $TM$ is trivial if and only if $w_2(M) = e(M) = 0 = p_1(N_i)$. Moreover, a stably parallelizable compact manifold with non-empty boundary is parallelizable. Given that the time-orientable spacetime $(M, g)$ has vanishing Euler characteristic and it is assumed to have a $\Spin(4)$-structure, we conclude that all such characteristic classes are zero. This concludes the proof of the implication $(1)\Rightarrow (3)$. 

The proof of the implication $(3)\Rightarrow (2)$ follows from the proposition

\begin{proposition}\label{Proposition Stably Parallelizable B}\cite{[Kervaire2.1]}, \cite[Proposition 3.4]{[AguilarSeadeVerjovsky]}. Let $M$ be an even-dimensional stably parallelizable manifold with non-empty boundary. Then $\hat{\chi}_{\Z/2}(\partial M) = \chi (M) \mod 2$.
\end{proposition}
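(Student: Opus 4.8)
The plan is to work entirely with $\Z/2$-coefficients and to reduce the statement to the parity of the rank of the middle-dimensional mod-$2$ intersection form of $M$, which vanishes because stable parallelizability kills all Wu classes. Write $\dim M = 2q+2$, so that $N := \partial M$ is a closed $(2q+1)$-manifold and, by universal coefficients over the field $\Z/2$, $\hat\chi_{\Z/2}(N) = \sum_{i=0}^{q}\dim_{\Z/2}H_i(N;\Z/2)$. All homology and cohomology below is with $\Z/2$-coefficients.

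First I would set up the bookkeeping. Run the long exact sequence of the pair $(M,N)$ together with Lefschetz--Poincar\'e duality $H_i(M,N)\cong H^{2q+2-i}(M)\cong H_{2q+2-i}(M)$. Putting $a_i=\dim H_i(M)$, $b_i=\dim H_i(N)$ and $r_i=\dim\,\mathrm{im}\big(H_i(N)\to H_i(M)\big)$, exactness and duality yield, for every $i$, the identity $a_{2q+2-i}=a_i+b_{i-1}-r_i-r_{i-1}$; summing this over $i=0,\dots,q$ and folding the top half of the Betti numbers of $M$ onto the bottom half gives, modulo $2$,
\[
\chi(M)\ \equiv\ \hat\chi_{\Z/2}(N)\ +\ \big(a_{q+1}-r_{q+1}\big)\pmod 2,
\]
where one also uses the relation $r_{q+1}=b_q-r_q$ read off from the same sequence at $i=q+1$ (the extra term $r_q$ enters only with coefficient $2$ and drops out). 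So it remains to prove that $d:=a_{q+1}-r_{q+1}$ is even.

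Next I would reinterpret $d$. From exactness, $d=a_{q+1}-r_{q+1}=\dim\,\mathrm{im}\big(H_{q+1}(M)\to H_{q+1}(M,N)\big)$, and after dualizing over $\Z/2$ this equals the rank of the restriction map $\mathrm{res}\colon H^{q+1}(M,N)\to H^{q+1}(M)$. Now consider the symmetric bilinear form $Q$ on $V:=H^{q+1}(M,N)$ defined by $Q(x,y)=\langle x\smile y,[M,N]\rangle$, using the relative cup product $H^{q+1}(M,N)\otimes H^{q+1}(M,N)\to H^{2q+2}(M,N)$. Since Lefschetz duality makes the pairing $H^{q+1}(M)\times H^{q+1}(M,N)\to\Z/2$ perfect, and since $Q(x,y)=\langle \mathrm{res}(x)\smile y,[M,N]\rangle$ by naturality of cup products, the radical of $Q$ is exactly $\ker(\mathrm{res})$; hence $\mathrm{rank}(Q)=\dim\,\mathrm{im}(\mathrm{res})=d$.

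Finally I would invoke the Wu formula for a compact manifold with boundary: for $x\in H^{q+1}(M,N)$ one has $\langle x\smile x,[M,N]\rangle=\langle\mathrm{Sq}^{q+1}x,[M,N]\rangle=\langle v_{q+1}(M)\smile x,[M,N]\rangle$, where $v_{q+1}(M)\in H^{q+1}(M;\Z/2)$ is the Wu class. Because $M$ is stably parallelizable, $w(TM)=1$, and the identity $w(TM)=\mathrm{Sq}(v(M))$ then forces $v_i(M)=0$ for all $i\ge 1$; in particular $v_{q+1}(M)=0$, so $Q(x,x)=0$ for every $x$. Thus $Q$ is alternating, a nondegenerate alternating form over $\Z/2$ has even rank, so $d=\mathrm{rank}(Q)$ is even, and the displayed congruence gives $\chi(M)\equiv\hat\chi_{\Z/2}(\partial M)\pmod 2$. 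The main obstacle is not any single deep input — the topological content is concentrated entirely in the vanishing of $v_{q+1}(M)$ — but rather keeping the relative/absolute distinctions consistent throughout: the Lefschetz duality pairings, the relative cup product and relative Wu class, and the mod-$2$ bookkeeping in the long exact sequence must all be matched up carefully.
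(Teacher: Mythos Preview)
Your argument is correct. Note first that the paper does not actually prove this proposition---it simply cites \cite{[Kervaire2.1]} and \cite[Proposition 3.4]{[AguilarSeadeVerjovsky]} and moves on. What the paper \emph{does} prove, however, is the closely related $\Spin$ analogue (Lemma~\ref{Lemma Main}), and your proof follows essentially the same template as that argument: run the long exact sequence of the pair with $\Z/2$-coefficients together with Lefschetz--Poincar\'e duality, reduce the parity of $\chi(M)+\hat\chi_{\Z/2}(\partial M)$ to the dimension of the image $W=\mathrm{im}\big(H^{q+1}(M,\partial M)\to H^{q+1}(M)\big)$, and then show $\dim W$ is even because the middle-dimensional relative cup-product form is alternating and restricts nondegenerately to $W$.

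The one genuine difference is how the alternating property is obtained. The paper (in Lemmas~\ref{Lemma CManifold} and~\ref{Lemma wBoundary}) doubles $M$ to a closed $\Spin$ manifold $P$, invokes the Hopkins--Singer vanishing of the middle Wu class $v^{q+1}(P)$, and then pulls this back via Kervaire's identification $s^{q+1}=\iota^\ast v^{q+1}$. You instead apply the relative Wu formula $w(TM)=\mathrm{Sq}(v(M))$ directly on $M$; since stable parallelizability gives $w(TM)=1$, invertibility of the total Steenrod square yields $v_{q+1}(M)=0$ at once. This is strictly simpler in the stably parallelizable case---no doubling, no Hopkins--Singer---but you should supply a reference for the relative Wu formula on manifolds with boundary (Kervaire's paper \cite{[Kervaire1]} will do), or alternatively recover it via the same doubling trick the paper uses, since the relative identity $\mathrm{Sq}(v(M))=w(TM)$ follows from the closed case together with $v_i(M)=\iota^\ast v_i(P)$ and naturality of $\mathrm{Sq}$.
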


We now prove that the implication $(2) \Rightarrow (1)$ holds. The tangent bundle of an orientable 3-manifold is a trivial bundle. In particular, every oriented 3-manifold admits a $\Spin(3)$-structure and it $\Spin$ bounds since the group $\Omega_3^{\Spin}$ is trivial; see Theorem \ref{Theorem Spin Cobordism Groups}. This implies that there exists a $\Spin$ cobordism $(\hat{M}; N_1, N_2)$. Given that $\hat{M}$ admits a $\Spin(4)$-structure, we have\begin{equation}\label{Spin Condition}\hat{\chi}_{\Z/2}(\partial \hat{M}) + \chi(\hat{M}) = 0 \mod 2\end{equation} by a result of Kervaire-Milnor \cite[Lemma 5.9]{[KervaireMilnor]}. It follows from our hypothesis $\hat{\chi}_{\Z/2}(\partial \hat{M}) = \hat{\chi}_{\Z/2}(N_1) + \hat{\chi}_{\Z/2}(N_2) = 0$ and (\ref{Spin Condition}) that the Euler characteristic of $\hat{M}$ is an even integer number. Take non-negative integer numbers $k_1$ and $k_2$ and construct the connected sum\begin{equation}\label{Connected Sum 4D}M:= \hat{M}\#k_1(S^1\times S^3)\#k_2(S^2\times S^2)\end{equation} of $\hat{M}$ with $k_1$ copies of the product of the circle and the 3-sphere $S^1\times S^3$, and $k_2$ copies of the product of two 2-spheres $S^2\times S^2$. Since $\chi(\hat{M})$ is an even number, it is straight forward to see that $k_1$ and $k_2$ can be chosen so that the manifold (\ref{Connected Sum 4D}) has zero Euler characteristic. Theorem \ref{Theorem Sorkin} and Lemma \ref{Lemma Equivalent Definitions} imply that there is a time-orientable Lorentzian metric $(M, g)$ for which $((M; N_1, N_2), g)$ is a $\So(1, 3)_0$-Lorentzian cobordism. The connected sum of two manifolds that admit a $\Spin(n)$-structure can be equipped with a $\Spin(n)$-structure \cite[Remark 2.17]{[LawsonMichelsohn]}. In particular, the manifold (\ref{Connected Sum 4D}) admits a $\Spin(4)$-structure and we conclude that $((M; N_1, N_2), g)$ is a $\Spin(1, 3)_0$-Lorentzian cobordism. This finishes the proof of the implication $(2)\Rightarrow (1)$. Finally, the 3-sphere generates the group $\Omega_{1, 3}^{\Spin_0}$ and the Kervaire semi-characteristic yields the group isomorphism (\ref{4D Isomorphism}).

\end{proof}

Another proof to Gibbons-Hawking's result can be found in \cite[Section 3]{[Chamblin1]}.

\subsection{$\Spin(1, n)_0$-Lorentzian cobordisms}\label{General Result} The complete answer to Question \ref{Question 2} is given in the following theorem.

\begin{thm}\label{Theorem General} Let $\{N_1, N_2\}$ be closed smooth n-manifolds and suppose there exists a $\Spin$ cobordism $(\hat{M}; N_1, N_2)$. 

If $n \neq 7 \mod 8$, then there is a $\Spin(1, n)_0$-Lorentzian cobordism $((M; N_1, N_2), g)$ if and only if 
\begin{enumerate}
\item $\chi(N_1) = \chi(N_2)$ for $n = 0 \mod 2$
\item $\hat{\chi}_{\Z/2}(N_1) = \hat{\chi}_{\Z/2}(N_2)$ for $n = 1, 3, 5 \mod 8$.
\end{enumerate}

If $n = 7 \mod 8$, then there is such a $\Spin(1, n)_0$-Lorentzian cobordism without any further requirements on the manifolds $N_1$ and $N_2$.
\end{thm}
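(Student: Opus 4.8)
The plan is to use Lemma~\ref{Lemma Equivalent Definitions}, together with the standard correspondence (Section~\ref{Section Spin Structures}) between $\Spin(1,n)_0$-structures on a time-orientable spacetime and $\Spin(n+1)$-structures on its tangent bundle, to recast a $\Spin(1,n)_0$-Lorentzian cobordism as a cobordism $(M;N_1,N_2)$ whose tangent bundle carries a $\Spin(n+1)$-structure and which admits a line field transverse to $\partial M$ (inward along $N_1$, outward along $N_2$); by Theorem~\ref{Theorem Sorkin} the latter exists precisely when $\chi(M)=0$ (for $n$ odd) or $\chi(N_1)=\chi(N_2)$ (for $n$ even). The even case is then immediate: $\dim M=n+1$ is odd, the condition $\chi(N_1)=\chi(N_2)$ refers to $N_1,N_2$ alone; necessity follows by applying Theorem~\ref{Theorem Sorkin} to $M$, and for sufficiency one equips the hypothesised $\Spin$ cobordism $\hat{M}$ with the line field of Theorem~\ref{Theorem Sorkin} and the Lorentzian metric of Lemma~\ref{Lemma Equivalent Definitions}, the $\Spin(n+1)$-structure of $\hat{M}$ providing the $\Spin(1,n)_0$-structure. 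This is statement~(1).

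Assume henceforth that $n$ is odd, so $\dim M=n+1$ is even and the task is to produce a $\Spin$ cobordism with vanishing Euler characteristic. I would first note that the interior connected sum of $\hat{M}$ with a closed $\Spin$ $(n+1)$-manifold $P$ is again a $\Spin$ cobordism between $N_1$ and $N_2$, with $\chi$ altered by $\chi(P)-2$; taking $P=S^1\times S^n$ and $P=S^2\times S^{n-1}$, both spin, produces alterations $-2$ and $+2$, so every value in $\chi(\hat{M})+2\Z$ is realised. Hence a $\Spin(1,n)_0$-Lorentzian cobordism exists if and only if some $\Spin$ cobordism between $N_1$ and $N_2$ has even Euler characteristic. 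To decide this, glue two $\Spin$ cobordisms $\hat{M},\hat{M}'$ between $N_1$ and $N_2$ along their boundary: the result is a closed $\Spin$ $(n+1)$-manifold $X$ with $\chi(X)=\chi(\hat{M})+\chi(\hat{M}')$, since $\chi(N_i)=0$; thus the parity of $\chi(\hat{M})$ is independent of the choice of cobordism exactly when every closed $\Spin$ $(n+1)$-manifold has even Euler characteristic.

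This is where the dimensional distinction of Theorem~\ref{Theorem General} enters. If $n+1\equiv 2\bmod 4$, the nondegenerate skew-symmetric Poincar\'e pairing on $H^{(n+1)/2}(\,\cdot\,;\Q)$ forces $\chi$ to be even; if $n+1\equiv 4\bmod 8$, then $\chi\equiv\sigma\bmod 2$ while the signature $\sigma$ of a closed spin $(8k+4)$-manifold is divisible by $16$ (Rokhlin for $k=0$, Ochanine in general), so again $\chi$ is even; but if $n+1\equiv 0\bmod 8$, the quaternionic projective space $\Ha P^{(n+1)/4}$ is spin with odd Euler characteristic $(n+1)/4+1$. Consequently, when $n\equiv 7\bmod 8$, from any $\Spin$ cobordism $\hat{M}$ one first takes the connected sum with $\Ha P^{(n+1)/4}$ if $\chi(\hat{M})$ is odd, then with copies of $S^1\times S^n$ and $S^2\times S^{n-1}$, reaching $\chi=0$: so a $\Spin(1,n)_0$-Lorentzian cobordism always exists, which is the last assertion of the theorem. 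When $n\equiv 1,3,5\bmod 8$ the parity of $\chi(\hat{M})$ depends only on $N_1$ and $N_2$, and it remains to compute it.

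For this last point I would invoke the spin refinement of Proposition~\ref{Proposition Stably Parallelizable B}: for a compact spin manifold $M^{2m}$ with $2m\not\equiv 0\bmod 8$ one has $\chi(M)\equiv\hat{\chi}_{\Z/2}(\partial M)\bmod 2$ --- for $2m=4$ this is \cite[Lemma~5.9]{[KervaireMilnor]}, and in general it follows from the (generalized) Kervaire semi-characteristic congruences together with the structure of $\Omega^{\Spin}_{\ast}$, the restriction $2m\not\equiv 0\bmod 8$ being exactly what removes the correction term that $\Ha P^{2k}$ would otherwise detect. Applied to $\hat{M}$, this gives $\chi(\hat{M})\equiv\hat{\chi}_{\Z/2}(N_1)+\hat{\chi}_{\Z/2}(N_2)\bmod 2$, so that, combining with the second paragraph, the $\Spin(1,n)_0$-Lorentzian cobordism exists if and only if $\hat{\chi}_{\Z/2}(N_1)=\hat{\chi}_{\Z/2}(N_2)$; necessity also follows by applying the same congruence directly to $M$. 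This is statement~(2). I expect the principal obstacle to be precisely this last ingredient --- upgrading the Kervaire--Milnor semi-characteristic congruence from the stably parallelizable setting of Proposition~\ref{Proposition Stably Parallelizable B} to spin manifolds and pinning down the dimensions in which it persists --- together with the companion fact, used above, that closed spin manifolds have even Euler characteristic outside dimensions divisible by $8$, which relies on the signature-divisibility theorems of Rokhlin and Ochanine. The connected-sum surgeries and the bookkeeping of induced spin structures are routine in comparison.
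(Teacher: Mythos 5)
Your overall architecture is sound and largely parallel to the paper's: reduce via Lemma~\ref{Lemma Equivalent Definitions} and Theorem~\ref{Theorem Sorkin} to producing a $\Spin$ cobordism with $\chi(M)=0$, adjust $\chi$ by interior connected sums with closed spin $(n+1)$-manifolds, and locate the obstruction in the parity of $\chi(\hat M)$. Your gluing observation --- that comparing two spin cobordisms via the closed double reduces well-definedness of that parity to the evenness of Euler characteristics of closed spin $(n+1)$-manifolds, and that $\mathbb{H}P^{(n+1)/4}$ breaks this exactly when $n\equiv 7\bmod 8$ --- is a genuinely nice conceptual framing, and your treatment of the closed case (skew middle pairing when $n+1\equiv 2\bmod 4$, signature parity plus Rokhlin/Ochanine when $n+1\equiv 4\bmod 8$) is correct, though heavier machinery than needed (the paper's Wu-class vanishing, Lemma~\ref{Lemma CManifold}, already yields evenness of the middle $\Z/2$-Betti number in both of those congruence classes without invoking Ochanine).

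However, there is a genuine gap, and you have correctly diagnosed where it is. The gluing argument establishes that $\chi(\hat M)\bmod 2$ is an invariant of the pair $(N_1,N_2)$, not which invariant it is. To identify it as $\hat{\chi}_{\Z/2}(N_1)+\hat{\chi}_{\Z/2}(N_2)$ you need precisely the congruence
\[
\chi(M)+\hat{\chi}_{\Z/2}(\partial M)\equiv 0\ (\mathrm{mod}\ 2)
\]
for compact spin $M$ with $\dim M\not\equiv 0\bmod 8$, and this you do not prove: you assert it ``follows from the (generalized) Kervaire semi-characteristic congruences together with the structure of $\Omega^{\Spin}_{\ast}$,'' which is not an argument. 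It is also not a formal consequence of Proposition~\ref{Proposition Stably Parallelizable B}, since spin does not imply stably parallelizable (e.g.\ $\mathbb{C}P^3$), nor does the closed-manifold parity statement bootstrap to the relative one (applying it to a spin filling of $\partial M$ just reproduces the same unproven congruence with empty second boundary). The paper's Lemma~\ref{Lemma Main} supplies exactly this step, and its proof is nontrivial: one needs (i) the vanishing $v^q=0$ of the degree-$q$ Wu class on closed spin $2q$-manifolds when $q\not\equiv 0\bmod 4$ (Hopkins--Singer, Lemma~\ref{Lemma CManifold}); (ii) Kervaire's identification of the relative self-cup class $s^q\in H^q(M)$ with the restriction of the Wu class of the double $M\cup_{\partial M}\overline{M}$, giving skew-symmetry of the relative pairing (Lemma~\ref{Lemma wBoundary}); and (iii) a Lefschetz-duality/long-exact-sequence computation showing $\chi(M)+\hat{\chi}_{\Z/2}(\partial M)\equiv\dim W\bmod 2$ where $W=\mathrm{im}\bigl(H^q(M,\partial M)\to H^q(M)\bigr)$ carries a nondegenerate skew form and is therefore even-dimensional. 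Without reproducing something equivalent to that chain of lemmas, both directions of statement~(2) remain unestablished.
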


\begin{proof} Item (1) is due to Reinhart; see Theorem \ref{Theorem Sorkin}. We now prove the last claim in the statement of Theorem \ref{Theorem General} and assume that the dimension of $N_1$ and $N_2$ is $n = 7 \mod 8$, i.e., of the form $n = 8q + 8$ for $q$ a non-negative integer number. We start with a given $\Spin$ cobordism $(\hat{M}; N_1, N_2)$, and we need to build a $\Spin$ cobordism $(M; N_1, N_2)$ with $\chi(M) = 0$. Theorem \ref{Theorem Sorkin} will then finish the proof of the claim. With this enterprise in mind, consider the quaternionic projective (2q + 2)-space $\mathbb{H} P^{2q + 2}$. The (real) dimension of $\mathbb{H}P^{2q + 2}$ is $8q + 8$, it admits a $\Spin(8q + 8)$-structure \cite[Example 2.4]{[LawsonMichelsohn]}, and the Euler characteristic is $\chi(\mathbb{H}P^{2q + 2}) = 2q + 3$. There are non-negative integers $k_1$ and $k_2$ such that the connected sum\begin{equation}\label{CS2}M:= \hat{M}\# k_1\mathbb{H}P^{2q + 2} \# k_2 T^{8q + 8}\end{equation} of $\hat{M}$ with $k_1$ copies of the quaternionic projective $(2q + 2)$-space and $k_2$ copies of the $(8q + 8)$-torus, has zero Euler characteristic. Moreover, the manifold (\ref{CS2}) admits a $\Spin(8q + 8)$-structure \cite[Remark 2.17]{[LawsonMichelsohn]} and $\partial M = N_1\sqcup N_2$, i.e., $(M; N_1, N_2)$ is a $\Spin$ cobordism. This concludes the proof of the last claim in the statement of Theorem \ref{Theorem General}.

A proof of the claims for the cases $n = 1, 5 \mod 8$ is obtained by using work of Kevaire-Milnor \cite{[KervaireMilnor]} and Lusztig-Milnor-Peterson \cite{[LMP]} (cf. \cite[p. 240]{[Kervaire2.1]}) as we now explain. Notice that in these cases correspond to dimensions of the form $n = 4q + 1$ for $q\in \N$.

\begin{lemma}\label{Lemma Literature}Kervaire-Milnor \cite{[KervaireMilnor]}, Lusztig-Milnor-Peterson \cite{[LMP]}. Let $M$ be a compact manifold with non-empty boundary that admits a $\Spin(4q + 2)$-structure. The identity\begin{equation}\label{Spin Identity1}\chi(M) + \hat{\chi}_{\Z/2}(\partial M) = 0 \mod 2\end{equation}holds.
\end{lemma}

We proceed to give a proof of Lemma \ref{Lemma Literature}. 

\begin{proof} It is proven in \cite[Lemma 5.6]{[KervaireMilnor]} that the rank of the bilinear pairing\begin{equation}H_{2q + 1}(M; F)\otimes H_{2q + 1}(M; F)\rightarrow F\end{equation} given by the intersection number is congruent  to the sum $\chi(M; F) + \hat{\chi}_F(\partial M)$ modulo 2, where $\chi(M; F)$ is the Euler characteristic with coefficients in the field $F$. With the choice $F = \mathbb{Q}$, we have\begin{equation}\chi(M; \mathbb{Q}) + \hat{\chi}_{\mathbb{Q}}(\partial M) = 0 \mod 2\end{equation} \cite[Proof of Lemma 5.8]{[KervaireMilnor]}. Since $M$ is assumed to admit a $\Spin(4q + 2)$-structure and $\partial M$ has dimension $4q + 1$, we have $\chi(M) + \hat{\chi}_{\Z/2}(\partial M) = 0 \mod 2$ \cite[Theorem]{[LMP]}. This concludes the proof of Lemma \ref{Lemma Literature}.
\end{proof}

Lemma \ref{Lemma Literature} and Theorem \ref{Theorem Sorkin} now yields a proof of Theorem \ref{Theorem General} when the dimension of the manifolds $N_1$ and $N_2$ is of the form $4q + 1$. Suppose that there exists a $\Spin(1, 4q + 1)_0$-Lorentzian cobordism $((M; N_1, N_2), g)$. Theorem \ref{Theorem Sorkin} says $\chi(M) = 0$ and (\ref{Spin Identity1}) implies $\hat{\chi}_{\Z/2}(N_1) = \hat{\chi}_{\Z/2}(N_2)$. Let us prove the converse. The identity (\ref{Spin Identity1}) implies that the Euler characteristic of $\hat{M}$ is even since $\hat{\chi}_{\Z/2}(N_1) =  \hat{\chi}_{\Z/2}(N_2)$ by hypothesis. There are non-negative integer numbers $k_1$ and $k_2$ such that the $(4q + 2)$-manifold\begin{equation}M:= \hat{M}\#k_1(\mathbb{H}P^q\times S^2)\# k_2T^{4q + 2}\end{equation}is a $\Spin$ cobordism $(M; N_1, N_2)$ with $\chi(M) = 0$. Theorem \ref{Theorem Sorkin} implies the existence of a $\Spin(1, 4q + 1)_0$-Lorentzian cobordism $((M; N_1, N_2), g)$. This concludes the proof of the case of manifolds of dimension $4q + 1$ for $q\in \N$. 

Our proof of Item (2) begins with the following triad of lemmas. 

\begin{lemma}\label{Lemma CManifold} Let $M$ be a closed $2q$-manifold that admits a $\Spin(2q)$-structure and suppose $q\neq 0$ $\Mod 4$. The cup product pairing\begin{equation}\cup\colon H^{q}(M) \otimes H^{q}(M) \to H^{2q}(M)\end{equation}is skew-symmetric.
\end{lemma}

\begin{proof}Consider the linear map $H^{q}(M) \rightarrow H^{2q}(M)$ given by $x \mapsto x^2$. We  show that $x^2 = 0$. The cup product $H^{q}(M) \otimes H^{q}(M) \to H^{2q}(M)$ is non-degenerate since $M$ is assumed to be closed. It follows that there exists a unique `'characteristic'' class $v^{q} \in H^{q}(M)$ such that $x^2 = x \cup v$. This class is known as the $q$th Wu class in the literature. A result of  Hopkins-Singer \cite[Lemma E.1]{[HopkinsSinger]} says that $v^q = 0$ provided $q\neq 0 \Mod 4$ and $M$ admits a $\Spin(2q)$-structure. We conclude $x^2 = 0$ as it was claimed.
\end{proof}

\begin{lemma}\label{Lemma wBoundary}Let $M$ be a compact $2q$-manifold that admits a $\Spin(2q)$-structure and with non-empty boundary. Suppose $q\neq 0$ $\Mod 4$. The cup product pairing\begin{equation}\cup\colon H^{q}(M,\partial M) \otimes H^{q}(M,\partial M) \to H^{2q}(M,\partial M)\end{equation} is skew-symmetric.
\end{lemma}

\begin{proof}Set $A:= \partial M$ to ease notation. We show that $x^2 = 0$ for every $x \in H^{q}(M,A)$. According to Kervaire \cite[p. 530] {[Kervaire1]}, the pairing $H^{q}(X,A) \otimes H^{q}(X) \to H^{2q}(X,A)$ is completely orthogonal. A pairing is said to be completely orthogonal if either of the first two groups involved is isomorphic to the group of all homomorphisms of the other into the third. Thus one can conclude that there exists a unique class $s^{q} \in H^{q}(M)$ such that $x^2 = x \cup s$ for every $x \in H^{q}(M,A)$.

Let $P$ be the closed $2q$-manifold that is obtained by gluing two copies of $M$ along $A$ using the identity map to identify the boundaries, i.e., $P:= M\cup_A \overline{M}$, and let $\imath: M \hookrightarrow P$ be the natural inclusion. Since we assumed $M$ to have a $\Spin(2q)$-structure, then $P$ also has a $\Spin(2q)$-structure. Kervaire has shown that\begin{equation}\label{SMC}s^{q} = \imath^{\ast}v^q,\end{equation}where $v^q$ is the $q$th Wu class of $P$ \cite[Lemma (7.3)]{[Kervaire1]}. Lemma \ref{Lemma CManifold} says that $v^q = 0$, which implies $s^q = 0$ by (\ref{SMC}).
\end{proof}

\begin{lemma}\label{Lemma Main}Let $M$ be a compact $2q$-manifold with non-empty boundary, and which admits a $\Spin(2q)$-structure. Suppose $q\neq 0 \Mod 4$. The identity\begin{equation}\chi(M) + \hat{\chi}_{\Z/2}(\partial M) = 0\, \text{\normalfont{mod}}\,2.\end{equation}holds.
\end{lemma}

The following argument is an extension of the proof of the main result in \cite{[GibbonsHawking1]}. Geiges obtained a similar result in the case of orientable 6-manifolds with non-empty boundary \cite[Lemma 8.2.13]{[Geiges]}.

\begin{proof}Set $A:= \partial M$ to ease notation. Consider the exact sequence of homomorphisms of cohomology groups corresponding to an orientable cobordism\begin{equation}\label{Sequence1}0 \to H^{0}(M,A) \to H^{0}(M) \to H^{0}(A) \to \cdots \to H^{q}(M,A) \overset{F}\to H^{q}(M) \to \cdots,\end{equation}where we use $\Z/2$-coefficients. Let $W$ to be the image of $H^q(M,A)$ inside $H^q(M)$ under the group homomorphism that we have labelled $F$ in sequence (\ref{Sequence1}). We apply Lefschetz-Poincar\'e duality between relative cohomology groups and homology groups to  obtain the exact sequence sequence\begin{equation}\label{Sequence2}
0\rightarrow \Z/2 \rightarrow H^0(A) \rightarrow H_{2q - 1}(M)\rightarrow \cdots \rightarrow H^{q - 1}(A)\rightarrow H_q(M) \rightarrow W \rightarrow 0.
\end{equation} Notice that $H^0(M, A)\cong H_n(M) = 0$ since the boundary of $M$ is non-empty. Exactness of the sequences (\ref{Sequence1}) and (\ref{Sequence2}) implies that the alternating sum of the dimensions of these vector spaces over $\Z/2$ vanishes, i.e.,\begin{equation}\label{Expression1}
\sum_{i = 0}^{q} \text{\normalfont{dim}}\,H^{i}(M,A) + 
\sum_{i = 0}^{q - 1} \text{\normalfont{dim}}\,H^{i}(M) +
\sum_{i = 0}^{q - 1} \text{\normalfont{dim}}\,H^{i}(A) + 
\text{\normalfont{dim}}\,W = 0.
\end{equation}

Reversing any of the signs in (\ref{Expression1}) yields the relation
\begin{equation}
\chi(M) +  \hat{\chi}_{\Z/2}(A) = \text{\normalfont{dim}}\,W \mod 2.
\end{equation}

Since the pairing $H^{q}(X,A) \otimes H^{q}(X) \to H^{2q}(X,A)$ is completely orthogonal, the restriction of $H^{q}(X,A) \otimes H^{q}(X,A) \to H^{2q}(X,A)$ to $W$ is non-degenerate. This form is skew-symmetric by Lemma \ref{Lemma wBoundary}. Hence, the dimension of $W$ is even.
This finishes the proof.
\end{proof}

The proof of Item (2) now goes as follows. Suppose there exists a $\Spin(1, n)_0$-Lorentzian cobordism $((M; N_1, N_2), g)$. Theorem \ref{Theorem Sorkin} implies $\chi(M) = 0$, and appealing to Lemma \ref{Lemma Main} we conclude that $\hat{\chi}_{\Z/2}(N_1)  = \hat{\chi}_{\Z/2}(N_1)$. Let us prove the converse. Assume that there exists a $\Spin$ cobordism $(\hat{M}; N_1, N_2)$ and $\hat{\chi}_{\Z/2}(N_1) = \hat{\chi}_{\Z/2}(N_2)$ Lemma \ref{Lemma Main} implies that the Euler characteristic of $\hat{M}$ is even. We now build connected sums that chosen according to the dimension to be considered. For dimensions of the form $n = 8q + 1$, take\begin{equation}\label{Connected Sum3}M:= \hat{M} \# k_1( \mathbb{H}P^{2q}\times S^2) \# k_2 T^{8q + 2}.\end{equation}Since there is a unique closed 1-manifold up to diffeomorphism, the details for this case are left to the reader and we assume $q\in \N$. For dimensions of the form $n = 8q + 3$, we consider\begin{equation}\label{Connected Sum4}M:= \hat{M}  \# k_1(\mathbb{H}P^{2q}\times S^2 \times S^2 )\# k_2 T^{8q + 4}.\end{equation} The case $n = 3$ has been proven in Theorem \ref{Theorem GibbonsHawking} and we will assume $q\in \N$. For $n = 8q + 3$, we take\begin{equation}\label{Connected Sum5}M:= \hat{M} \# k_1(\mathbb{H}P^{2q + 1}\times S^2)\# k_2 T^{8q + 6}\end{equation}where $q$ is a non-negative integer number. The manifolds (\ref{Connected Sum3}), (\ref{Connected Sum4}) and (\ref{Connected Sum5}) yield a $\Spin$ cobordism $(M; N_1, N_2)$ according to the chosen dimension, and in all cases $\chi(M) = 0$. Theorem \ref{Theorem Sorkin} now concludes the proof of the theorem. 
\end{proof}

\subsection{Corollaries and $\Spin(1, n)_0$-Lorentzian cobordism groups}\label{Section Consequences} We collect some consequences of Theorem \ref{Theorem General} and compute several $\Spin(1, n)_0$-Lorentzian cobordism groups using Theorem \ref{Theorem Spin Cobordism Groups}. We start with $(4 + 1)$-spacetimes in the following corollary.

\begin{cor} Let $\{N_1, N_2\}$ be closed smooth 4-manifolds. The following conditions are equivalent.\begin{enumerate}
\item There exists a $\Spin(1, 4)_0$- Lorentzian cobordism $((M; N_1, N_2), g)$.
\item The 4-manifolds $N_1$ and $N_2$ are stably parallelizable and $\chi(N_1) = \chi(N_2)$.
\end{enumerate}

Moreover, there is a group isomorphism\begin{equation}\label{5D Isomorphism}\Omega^{\Spin_0}_{1, 4} \rightarrow \Z\oplus \Z.\end{equation}
\end{cor}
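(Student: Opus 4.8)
The plan is to read the existence statement off Theorem~\ref{Theorem General} and then recast it in terms of classical invariants of $4$-manifolds. Since $M$ is $5$-dimensional we are in the case $n=4$, which is even and satisfies $n\neq 7\bmod 8$, so Theorem~\ref{Theorem General} gives a $\Spin(1,4)_0$-Lorentzian cobordism $((M;N_1,N_2),g)$ if and only if there is a $\Spin$ cobordism $(\hat M;N_1,N_2)$ and $\chi(N_1)=\chi(N_2)$. The remaining work is to match the condition ``there is a $\Spin$ cobordism between $N_1$ and $N_2$'' with ``$N_1$ and $N_2$ are stably parallelizable''. I would do this with three standard inputs: a closed oriented $4$-manifold admits a $\Spin(4)$-structure if and only if $w_2=0$; the Hirzebruch signature theorem gives $p_1=3\sigma$, so a closed oriented spin $4$-manifold is stably parallelizable precisely when $\sigma=0$; and, by Rokhlin's theorem together with Milnor's computation, $\Omega_4^{\Spin}\cong\Z$ with the signature realizing the isomorphism onto $16\Z$.

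For $(2)\Rightarrow(1)$: if $N_1$ and $N_2$ are stably parallelizable, then they are spin with vanishing signature and hence each represents $0$ in $\Omega_4^{\Spin}$; each therefore $\Spin$ bounds, and the disjoint union of the two $\Spin$ coboundaries (joined by a $1$-handle if one wants $\hat M$ connected) is a $\Spin$ cobordism $(\hat M;N_1,N_2)$, so Theorem~\ref{Theorem General} together with $\chi(N_1)=\chi(N_2)$ produces the $\Spin(1,4)_0$-Lorentzian cobordism. For the converse $(1)\Rightarrow(2)$: Theorem~\ref{Theorem General} returns a $\Spin$ cobordism $(\hat M;N_1,N_2)$ and the equality $\chi(N_1)=\chi(N_2)$, and restricting the $\Spin(5)$-structure on $\hat M$ to the boundary gives $w_2(N_i)=0$; the remaining point, $\sigma(N_i)=0$, is the delicate one, and here I would have to examine carefully how the $\Spin$ coboundary constrains the Pontryagin number of each boundary component.

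For the group isomorphism I would use the map $\Omega^{\Spin_0}_{1,4}\to\Z\oplus\Z$ defined by $[N]\mapsto(\tfrac12\chi(N),\tfrac1{16}\sigma(N))$, noting that $\chi$ is automatically even and $\sigma$ automatically divisible by $16$ (Rokhlin) on a closed spin $4$-manifold, and that both are additive under disjoint union. That the map descends to $\Spin$-Lorentzian cobordism classes is precisely Theorem~\ref{Theorem General} in dimension four (forcing $\chi(N_1)=\chi(N_2)$) together with the fact that a $\Spin$ cobordism identifies $N_1$ and $N_2$ in $\Omega_4^{\Spin}\cong\Z$ (forcing $\sigma(N_1)=\sigma(N_2)$). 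Injectivity is the same argument in reverse: $\chi(N)=0$ and $\sigma(N)=0$ force $N$ to be a $\Spin$ boundary with zero Euler characteristic, whence Theorem~\ref{Theorem General} gives a $\Spin(1,4)_0$-Lorentzian cobordism from $N$ to the empty manifold. Surjectivity is immediate, since $S^4$ maps to $(1,0)$ and the K3 surface to $(12,-1)$, and these two elements generate $\Z\oplus\Z$.

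The step I expect to be the main obstacle is the implication $(1)\Rightarrow(2)$ --- extracting stable parallelizability of each boundary component from the bare existence of a $\Spin$ coboundary, for which the Euler-characteristic data alone does not obviously suffice --- and, in the same vein, verifying that $\tfrac12\chi$ and $\tfrac1{16}\sigma$ genuinely descend to $\Spin$-Lorentzian cobordism classes; by contrast, $(2)\Rightarrow(1)$ and the surjectivity count are routine once Theorem~\ref{Theorem General} and the structure of $\Omega_4^{\Spin}$ are in hand.
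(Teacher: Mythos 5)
Your proposal follows essentially the same route as the paper: read off the existence condition from Theorem~\ref{Theorem General} in the even case $n=4$, translate ``$\Spin$ cobordant'' into stable parallelizability via the vanishing of $w_2$ and $p_1$ (equivalently $\sigma$), and detect the cobordism class by the pair $(\chi,\sigma)$. Your $(2)\Rightarrow(1)$ matches the paper's, and the normalized map $[N]\mapsto(\tfrac12\chi(N),\tfrac1{16}\sigma(N))$ is a genuine improvement: the paper asserts that $(\chi,\sigma)$ itself is an isomorphism onto $\Z\oplus\Z$, but on closed spin $4$-manifolds $\chi$ is even and $\sigma\in 16\Z$, so the unnormalized map only lands in $2\Z\oplus 16\Z$; your rescaling, together with the computation that $S^4\mapsto(1,0)$ and K3 $\mapsto(12,-1)$ have determinant $\pm1$, is the correct way to exhibit the isomorphism. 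The descent and injectivity arguments you sketch (spin cobordism forces $\sigma(N_1)=\sigma(N_2)$, Theorem~\ref{Theorem General} forces $\chi(N_1)=\chi(N_2)$, and conversely $\chi=\sigma=0$ forces $N$ to be a spin boundary and then a $\Spin(1,4)_0$-Lorentzian boundary) are sound.

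The step you flag as delicate, extracting $\sigma(N_i)=0$ individually in $(1)\Rightarrow(2)$, is in fact the crux, and your caution is well founded rather than excessive. The paper disposes of it by asserting that, since a $\Spin$ cobordism is an oriented cobordism, ``$p_1(N_1)=0=p_1(N_2)$''; but an oriented (or spin) cobordism between $N_1$ and $N_2$ only forces the Pontryagin \emph{numbers} to agree, $p_1[N_1]=p_1[N_2]$, i.e.\ $\sigma(N_1)=\sigma(N_2)$, not that each vanishes. Concretely, the product $K3\times[0,1]$ is a $\Spin$ cobordism between two copies of the K3 surface with $\chi(N_1)=\chi(N_2)=24$, so by Theorem~\ref{Theorem General} there is a $\Spin(1,4)_0$-Lorentzian cobordism, yet $\sigma(K3)=-16\neq0$ and K3 is not stably parallelizable. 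So condition~(2) as written does not follow from condition~(1) by the argument given, and the statement apparently needs to be weakened to ``$\sigma(N_1)=\sigma(N_2)$ and $\chi(N_1)=\chi(N_2)$'' (plus both $N_i$ spin), under which formulation both your argument and the paper's go through and the group computation is unaffected. In short: you reproduce the paper's strategy, tighten the isomorphism statement, and correctly identify the one inference that does not survive scrutiny.
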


\begin{proof} Let us first show that the implication $(1)\Rightarrow (2)$ holds. Assume there is $\Spin(1, 4)_0$-Lorentzian cobordism $((M; N_1, N_2), g)$. Since the triple $(M; N_1, N_2)$ is a $\Spin$ cobordism, the 4-manifolds $N_1$ and $N_2$ admit a $\Spin(4)$-structure. Thus, the first two Stiefel-Whitney classes $w_1(N_i)$ and $w_2(N_i)$ are both zero for $i = 1, 2$ \cite[Theorem 2.1]{[LawsonMichelsohn]}. A $\Spin$ cobordism is in particular an oriented cobordism, hence the first Pontrjagin classes are $p_1(N_1) = 0 = p_1(N_2)$ \cite{[MilnorStasheff]}. This implies that $N_1$ and $N_2$ are stably parallelizable manifolds by a result of Dold-Whitney \cite{[DoldWhitney]}, which we have already appealed to in the proof of Theorem \ref{Theorem GibbonsHawking}. Theorem \ref{Theorem Sorkin} implies $\chi(N_1) = \chi(N_2)$, and we see that the implication $(1)\Rightarrow (2)$ holds. The proof of the implication $(2)\Rightarrow (1)$ is straight-forward. Since both $N_1$ and $N_2$ are assumed to be stably parallelizable 4-manifolds, their first Pontrjagin classes vanish. This implies that there is a $\Spin$ cobordism $(M; N_1, N_2)$ \cite[Chapter VIII, Theorem 1]{[Kirby]}, and the claim follows from Theorem \ref{Theorem Sorkin}. The group isomorphism (\ref{5D Isomorphism}) is given by the Euler characteristic and the signature of a 4-manifold. The group is generated by the 4-sphere and the K3 surface \cite[Example 2.14]{[LawsonMichelsohn]}.
\end{proof}

Regarding high-dimensional spacetimes, we obtain the following result. 

\begin{cor}\label{Theorem Extension 5D} Let $\{N_1, N_2\}$ be closed smooth $n$-manifolds, which admit a $\Spin(n)$-structure and let $n \in \{5, 6, 7\}$.

$\bullet$ If $n = 5$, there exists a $\Spin(1, 5)_0$-Lorentzian cobordism $((M; N_1, N_2), g)$ if and only if $\hat{\chi}_{\Z/2}(N_1)  = \hat{\chi}_{\Z/2}(N_2)$.

There is a group isomorphism\begin{equation}\label{6D Isomorphism}\Omega^{\Spin_0}_{1, 5} \rightarrow \Z/2.\end{equation}

$\bullet$ If $n = 6$, there exists a $\Spin(1, 6)_0$-Lorentzian cobordism $((M; N_1, N_2), g)$ if and only if $\chi(N_1)  = \chi(N_2)$.

There is a group isomorphism\begin{equation}\label{7D Isomorphism}\Omega^{\Spin_0}_{1, 6} \rightarrow \Z.\end{equation}

$\bullet$ If $n = 7$, there exists a $\Spin(1, 7)_0$-Lorentzian cobordism $((M; N_1, N_2), g)$. 

In particular, $\Omega^{\Spin_0}_{1, 7}$ is the trivial group. 
\end{cor}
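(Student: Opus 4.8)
The plan is to deduce everything from Theorem \ref{Theorem General} together with the vanishing $\Omega^{\Spin}_n=0$ for $n\in\{5,6,7\}$ recorded in Theorem \ref{Theorem Spin Cobordism Groups}. First I would note that this vanishing forces any closed $n$-manifold carrying a $\Spin(n)$-structure to $\Spin$-bound, so that any two such manifolds $N_1,N_2$ cobound a common $\Spin$ cobordism $(\hat M;N_1,N_2)$; hence the standing hypothesis of Theorem \ref{Theorem General} is met unconditionally in all three cases. The existence criteria are then read off directly from that theorem: for $n=5$ (which is $\equiv 5\pmod 8$) the obstruction is $\hat\chi_{\Z/2}(N_1)=\hat\chi_{\Z/2}(N_2)$; for $n=6$ (even) it is $\chi(N_1)=\chi(N_2)$; and for $n=7$ (which is $\equiv 7\pmod 8$) there is none.

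Next I would extract the group statements. Disjoint union makes $\hat\chi_{\Z/2}$ (for $n=5$) and $\chi$ (for $n=6$) additive, hence a homomorphism out of the corresponding $\Spin(1,n)_0$-Lorentzian cobordism group; and applying Theorem \ref{Theorem General} now with one of the two boundary pieces allowed to be empty (legitimate, since every such $N$ $\Spin$-bounds) shows that two classes agree exactly when the invariant does. Thus the fibers of the invariant are precisely the cobordism classes, so the induced map on classes is well defined and injective. For $n=7$ the analogous reading of Theorem \ref{Theorem General} says every closed $\Spin$ $7$-manifold is $\Spin(1,7)_0$-Lorentzian cobordant to the empty manifold, so $\Omega^{\Spin_0}_{1,7}=0$.

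It remains to identify the images for $n=5,6$. For $n=5$ one checks $\hat\chi_{\Z/2}(S^5)=1$ (only $H^0$ contributes) while $\hat\chi_{\Z/2}(\emptyset)=0$, so $\hat\chi_{\Z/2}\colon\Omega^{\Spin_0}_{1,5}\to\Z/2$ is onto, giving $\Omega^{\Spin_0}_{1,5}\cong\Z/2$ with generator $S^5$. For $n=6$ I would first observe that $\chi$ takes only even values on closed oriented $6$-manifolds: the intersection pairing on $H^3(N;\R)$ is skew-symmetric, so $b_3(N)$ is even and hence so is $\chi(N)$. Every even integer is realized by $\Spin$ $6$-manifolds --- e.g. $2k=\chi\big(\bigsqcup_k S^6\big)$ for $k\ge 0$ and $2k=\chi\big(\#_{1-k}(S^3\times S^3)\big)$ for $k\le 0$, all $\Spin$ by the behaviour of $\Spin$ structures under products, connected sums, and disjoint unions. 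Hence $\chi$ identifies $\Omega^{\Spin_0}_{1,6}$ with $2\Z$, and $\tfrac12\chi$ gives the isomorphism onto $\Z$, generated by $S^6$.

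The real content is entirely absorbed into Theorem \ref{Theorem General} and Theorem \ref{Theorem Spin Cobordism Groups}; what is genuinely left to do --- the parity argument for $\chi$ of an oriented $6$-manifold, and the verification that the chosen model manifolds are $\Spin$ and exhaust the target group --- is elementary. Accordingly the only (mild) obstacle is this bookkeeping, which I would dispatch quickly and leave the routine details to the reader.
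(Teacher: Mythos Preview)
Your proposal is correct and follows essentially the same route as the paper: deduce the existence criteria from Theorem \ref{Theorem General} combined with the vanishing of $\Omega_n^{\Spin}$ for $n\in\{5,6,7\}$ from Theorem \ref{Theorem Spin Cobordism Groups}, and then read off the cobordism groups via the Kervaire semi-characteristic (with generator $S^5$) and the Euler characteristic (with generator $S^6$). Your parity argument showing $\chi$ is even on closed oriented $6$-manifolds, hence that $\tfrac12\chi$ is the actual isomorphism onto $\Z$, is a point the paper leaves implicit.
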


There is an isomorphism $\Spin(1, 5)_0\rightarrow \Sl(2, \mathbb{H})$ \cite[Theorem8.4]{[LawsonMichelsohn]}. The Kervaire semi-characteristic yields the group isomorphism (\ref{6D Isomorphism}) and the 5-sphere is a generator for the group. The Euler characteristic yields the group isomorphism (\ref{7D Isomorphism}) and the 6-sphere is a generator for the group. We conclude this section by mentioning the following myriad of examples. 

\begin{cor} Let $N_1$ be a closed smooth $n$-manifold that is a $\Spin$ boundary and suppose $n\geq 5$. For any finitely presented group\begin{equation}\label{Group}G = \langle g_1, \ldots, g_s | r_1, \ldots, r_t\rangle,\end{equation}there is a closed smooth $n$-manifold $N_2(G)$ whose fundamental group is isomorphic to $G$, and a $\Spin(1, n)_0$-Lorentzian cobordism\begin{equation}((M; N_1, N_2(G)), g).\end{equation}

\end{cor}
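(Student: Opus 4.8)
The plan is to build $N_2(G)$ directly from a presentation of $G$, engineering it to be a closed spin $n$-manifold that is a $\Spin$ boundary, has fundamental group $G$, and carries the value of the characteristic number that Theorem \ref{Theorem General} constrains; Theorem \ref{Theorem General} then produces the desired $\Spin(1,n)_0$-Lorentzian cobordism.

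\emph{A spin-nullbordant model with fundamental group $G$.} Write $G = \langle g_1,\dots,g_s \mid r_1,\dots,r_t\rangle$. I would begin with the boundary connected sum $W_0 := \natural^{s}(S^1\times D^n)$, a compact parallelizable $(n+1)$-manifold with $\partial W_0 \cong \#^{s}(S^1\times S^{n-1})$ and $\pi_1(\partial W_0)\cong F_s$, the free group on $s$ generators. Realize each relator $r_j$ by an embedded circle $\gamma_j\subset\partial W_0$, the $\gamma_j$ pairwise disjoint (possible since $n\geq 5$) and each with trivial normal bundle (automatic, as $\partial W_0$ is orientable), and attach $t$ two-handles $D^2\times D^{n-1}$ along them. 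The framings of the $\gamma_j$ must be chosen so that the spin structure of $W_0$ extends over the handles; this is possible because $w_2$ of the trace lies in the image of $H^2(W,W_0;\Z/2)$, a group generated precisely by the classes that are toggled by the two possible framings of each $\gamma_j$ (which differ by the generator of $\pi_1(\So(n-1))\cong\Z/2$). The outcome is a compact spin $(n+1)$-manifold $W$ with $\pi_1(W)\cong F_s/\langle\langle r_1,\dots,r_t\rangle\rangle\cong G$; since $n\geq 5$, the glued-in pieces $D^2\times S^{n-2}$ are simply connected and a van Kampen argument also gives $\pi_1(\partial W)\cong G$. Set $N_2' := \partial W$, a closed spin $n$-manifold with $[N_2']=0$ in $\Omega_n^{\Spin}$.

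\emph{Matching the characteristic number.} If $n\equiv 7\bmod 8$, Theorem \ref{Theorem General} imposes no condition and I would take $N_2(G):=N_2'$. If $n$ is even, one needs $\chi(N_2(G))=\chi(N_1)$; both Euler characteristics are even, since a closed $n$-manifold bounding a compact $(n+1)$-manifold $V$ has $\chi=2\chi(V)$ when $n+1$ is odd. Connected sum with $S^2\times S^{n-2}$ raises $\chi$ by $2$ and connected sum with $S^3\times S^{n-3}$ lowers it by $2$; both summands are simply connected spin manifolds that are $\Spin$ boundaries (they bound the stably parallelizable $S^2\times D^{n-1}$ and $S^3\times D^{n-2}$), so an appropriate such connected sum yields $N_2(G)$ with $\pi_1\cong G$, $[N_2(G)]=0$ in $\Omega_n^{\Spin}$ and $\chi(N_2(G))=\chi(N_1)$. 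If $n\equiv 1,3,5\bmod 8$, one instead needs $\hat{\chi}_{\Z/2}(N_2(G))=\hat{\chi}_{\Z/2}(N_1)$; using $\hat{\chi}_{\Z/2}(A\#B)\equiv\hat{\chi}_{\Z/2}(A)+\hat{\chi}_{\Z/2}(B)+1\bmod 2$ for closed odd-dimensional $A,B$, together with $\hat{\chi}_{\Z/2}(S^2\times S^{n-2})=0$ (valid since $n\geq 5$), one sees that connected sum with a single copy of $S^2\times S^{n-2}$ toggles $\hat{\chi}_{\Z/2}$, so one of $N_2'$ or $N_2'\#(S^2\times S^{n-2})$ is the desired $N_2(G)$. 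In each case connected summing with a simply connected piece preserves $\pi_1\cong G$ and, by taking boundary connected sums of the bounding spin manifolds, preserves the property $[N_2(G)]=0$ in $\Omega_n^{\Spin}$.

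\emph{Conclusion and main difficulty.} Since $N_1$ is a $\Spin$ boundary by hypothesis and $N_2(G)$ is one by construction, both vanish in $\Omega_n^{\Spin}$ and there is a $\Spin$ cobordism $(\hat{M}; N_1, N_2(G))$; by the previous step $N_1$ and $N_2(G)$ meet the hypotheses of Theorem \ref{Theorem General} in the relevant residue class of $n$, so there is a $\Spin(1,n)_0$-Lorentzian cobordism $((M;N_1,N_2(G)),g)$. I expect the main obstacle to be the careful choice of spin-compatible framings for the surgeries in the second paragraph; the remainder is routine bookkeeping with Euler characteristics and Kervaire semi-characteristics under connected sum and with the standard effect of handle attachment on $\pi_1$.
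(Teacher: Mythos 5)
Your proof is correct and follows the same overall strategy as the paper: (i) produce a closed spin $n$-manifold with fundamental group $G$ that spin-bounds, (ii) adjust its Euler characteristic or Kervaire semi-characteristic by connected-summing with spin-null-bordant, simply connected manifolds $S^2\times S^{n-2}$ and $S^3\times S^{n-3}$, and (iii) invoke Theorem~\ref{Theorem General} (equivalently, Theorem~\ref{Theorem Sorkin} applied to a suitable $\Spin$ cobordism). The one place you diverge is step (i): the paper simply cites the Dehn--Kervaire theorem that such a manifold $N(G,n)$ exists and remarks that an inspection of Kervaire's proof shows it $\Spin$-bounds, whereas you re-derive this by explicitly attaching 2-handles to $W_0=\natural^s(S^1\times D^n)$ with spin-compatible framings and taking the boundary. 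Your construction is essentially Kervaire's and has the modest advantage of making both $\pi_1\cong G$ and the spin-null-bordance of the boundary manifest rather than deferred to a citation; the paper's route is shorter. Your framing discussion is a bit telegraphic but the underlying point -- that for $n\geq 4$ the two framings of an embedded circle differ by the generator of $\pi_1(\So(n-1))\cong\Z/2$, exactly one of which extends the spin structure over the 2-handle -- is standard and correct, and the bookkeeping with $\chi$, $\hat\chi_{\Z/2}$, and the connected-sum formula $\hat\chi_{\Z/2}(A\#B)\equiv\hat\chi_{\Z/2}(A)+\hat\chi_{\Z/2}(B)+1\pmod 2$ all checks out.
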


\begin{proof} Dehn showed that there exists a closed smooth $n$-manifold $N(G, n)$ for every $n\geq 4$ such that the fundamental group is $\pi_1(N(G, n)) \cong G$; see \cite{[Kervaire3]} for a proof due to Kervaire. An inspection of Kervaire's proof  we immediately conclude that $N(G, n)$ $\Spin$-bounds. Fix $n$ and consider the connected sum\begin{equation}\label{Manifold G}N_2(G):=N_2(G, n) \#k_1(S^3\times S^{n - 3})\#k_2(S^2\times S^{n - 2}),\end{equation}where $k_1$ and $k_2$ are non-negative integer numbers. The manifold (\ref{Manifold G}) $\Spin$ bounds by construction. Since we assumed that $N_1$ is a $\Spin$ boundary, it follows that there is a $\Spin$ cobordism $(\hat{M}; N_1, N_2(G))$. If $n\geq 5$, the Seifert-van Kampen theorem indicates that the fundamental group of $N_2(G)$ is isomorphic to $G$. Furthermore, the Euler characteristic of $N_1$ is even since  $N_1$ is the boundary of a compact manifold. When $n$ is even, the non-negative integers $k_1$ and $k_2$ can be chosen so that the equality\begin{equation}\chi(N_1) = \chi(N_2(G, n)\#k_1(S^3\times S^{n - 3})\#k_2(S^2\times S^{n - 2})\end{equation}is satisfied. In the case when $n$ is odd, the integers $p$ and $q$ can be chosen so that\begin{equation}\hat{\chi}_{\Z/2}(N_1) = \hat{\chi}_{\Z/2}(N_2(G, n)\#k_1(S^3\times S^{n - 3})\#k_2(S^2\times S^{n - 2}).\end{equation}The corollary now follows from Theorem \ref{Theorem Sorkin}. 
\end{proof}

\section{Restriction on gravitational kink numbers of $\Spin(1, n)_0$-weak Lorentzian Cobordisms}\label{Section GRKinksRestrictions}

Proposition \ref{Proposition Weak Lorentzian Cobordism} and Theorem \ref{Theorem Spin Cobordism Groups} immediately imply the following result. 

\begin{cor}Let $n\in \{3, 5, 6, 7\}$. There is a $\Spin(1, n)_0$-weak Lorentzian cobordism $((M; N_1, N_2), g)$ for every pair of closed smooth $n$-manifolds $N_1$ and $N_2$ that admit a $\Spin(n)$-structure.
\end{cor}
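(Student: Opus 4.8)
The plan is to combine two facts that are already in hand: Proposition \ref{Proposition Weak Lorentzian Cobordism}, which says that weak Lorentzian cobordance coincides with ordinary cobordance, and Theorem \ref{Theorem Spin Cobordism Groups}, which says that $\Omega_n^{\Spin} = 0$ for $n \in \{3, 5, 6, 7\}$. The strategy is: starting from two closed $n$-manifolds $N_1$ and $N_2$ admitting a $\Spin(n)$-structure, first produce a $\Spin$ cobordism between them, then upgrade that cobordism to one carrying a weak Lorentzian metric, and finally note that the underlying $\Spin(n+1)$-structure on the cobordism gives the desired $\Spin(1,n)_0$-structure on the spacetime via the Lemma identifying $\Spin(1,n)_0$-structures with $\Spin(n+1)$-structures on $TM$.

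First I would observe that since $\Omega_n^{\Spin} = 0$ for the dimensions in question, each of $N_1$ and $N_2$ $\Spin$ bounds; concretely, there are $\Spin(n+1)$-manifolds $W_1, W_2$ with $\partial W_i = N_i$ inducing the given $\Spin(n)$-structures. Gluing, the manifold $\hat{M} := W_1 \cup_{N_1 \times \{0\}} (N_1 \times [0,1]) \cup \dots$ — more simply, $\hat M := W_1 \sqcup_{?}$ — well, the cleanest phrasing is that $N_1 \sqcup \overline{N_2}$ $\Spin$ bounds (it is the boundary of $W_1 \sqcup \overline{W_2}$), so by the standard reformulation recalled in Section \ref{Section Group Structure} there is a $\Spin$ cobordism $(\hat M; N_1, N_2)$. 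Next, I would apply Proposition \ref{Proposition Lorentz Boundary} (equivalently the weak-cobordism construction around equation (\ref{New LMetric})) to equip $\hat M$ with a time-orientable nonsingular Lorentzian metric $g$ together with a timelike non-vanishing vector field $V$, so that $((\hat M; N_1, N_2), g)$ is a weak Lorentzian cobordism; here one uses that every manifold with non-empty boundary has a non-vanishing vector field, exactly as in the discussion preceding Proposition \ref{Proposition Lorentz Boundary}. Finally, since $\hat M$ already carries a $\Spin(n+1)$-structure, Definition \ref{Definition Spin LCobordisms} and the Lemma relating $\Spin(1,n)_0$-structures to $\Spin(n+1)$-structures show that $((\hat M; N_1, N_2), g)$ is a $\Spin(1,n)_0$-weak Lorentzian cobordism, which is what we want.

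There is essentially no obstacle here: the content is entirely in the cited vanishing of the low-dimensional $\Spin$ cobordism groups and in Proposition \ref{Proposition Weak Lorentzian Cobordism}, both of which are available. The only point requiring minimal care is the bookkeeping that a $\Spin(n+1)$-structure on $\hat M$ survives unchanged when we merely add a Lorentzian metric and a timelike line field — but this is immediate because the $\Spin$ structure lives on $TM$ and is independent of the choice of metric, and the weak Lorentzian condition (C.4) places no constraint incompatible with it. Thus the corollary is a direct corollary in the literal sense, and the proof is a two-line assembly of the two quoted results together with the construction of equation (\ref{New LMetric}).
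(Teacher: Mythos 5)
Your proof is correct and takes essentially the same route as the paper: the paper states that the corollary follows immediately from Proposition \ref{Proposition Weak Lorentzian Cobordism} (cobordant $\Leftrightarrow$ weak Lorentzian cobordant) together with Theorem \ref{Theorem Spin Cobordism Groups} ($\Omega_n^{\Spin}=0$ for these $n$), and your argument just unwinds these two ingredients — produce a $\Spin$ cobordism from the vanishing of $\Omega_n^{\Spin}$, endow it with a weak Lorentzian structure via Proposition \ref{Proposition Lorentz Boundary}, and observe the existing $\Spin(n+1)$-structure on the cobordism yields the required $\Spin(1,n)_0$-structure. No gaps; this is exactly the intended assembly.
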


Corollary \ref{Corollary Arbitrary Kinking Number} indicates the existence of a $\So(1, 2q + 1)_0$-weak Lorentzian cobordism with prescribed gravitational kinking number between any given cobordant manifolds of odd dimension. If the weak Lorentzian cobordism is required to carry a $\Spin$ structure, we observe that the parity of the gravitational kinking number and the Euler semi-characteristic of the boundary must coincide in the following sense. 
 
\begin{proposition}\label{Proposition Restriction Kinking Spin}If $((M; N_1, N_2), g)$ is a $\Spin(1, 2q + 1)_0$-weak Lorentzian cobordism, then\begin{equation}\hat{\chi}_{\Z/2}(\partial M) = \Kink(\partial M; g) \Mod 2.\end{equation}
\end{proposition}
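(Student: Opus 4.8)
The plan is to reduce the statement to an application of Theorem~\ref{Theorem Gravitational Kinks} combined with the $\Spin$-identities of Lemma~\ref{Lemma Literature} and Lemma~\ref{Lemma Main}, exactly as in the proof of Theorem~\ref{Theorem General}. The dimension of $M$ is $2q+2$, which is even, so Theorem~\ref{Theorem Gravitational Kinks} gives $\chi(M) = \Kink(\partial M; g)$ --- provided the boundary manifolds are stably parallelizable. The left-hand side $\chi(M)$ is controlled modulo $2$ by the $\Spin$-structure on $M$ via one of the congruences $\chi(M) + \hat\chi_{\Z/2}(\partial M) \equiv 0 \pmod 2$. Putting these together yields $\hat\chi_{\Z/2}(\partial M) \equiv \Kink(\partial M; g) \pmod 2$, which is the claim.

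First I would split on the residue of $2q+1$ modulo $8$, i.e.\ on $q \bmod 4$, to invoke the correct congruence. If $2q+1 \equiv 1, 5 \pmod 8$ (equivalently $2q+2 \equiv 2, 6 \pmod 8$, i.e.\ $q+1 \not\equiv 0 \pmod 2$ in the relevant sense, so that $2q+2 = 4q'+2$), then Lemma~\ref{Lemma Literature} applies with the $\Spin(4q'+2)$-structure on $M$ and gives $\chi(M) + \hat\chi_{\Z/2}(\partial M) \equiv 0 \pmod 2$. If instead $q+1 \not\equiv 0 \pmod 4$, then Lemma~\ref{Lemma Main} applies directly to the $(2q+2)$-manifold $M$ (writing $2q+2 = 2(q+1)$ with $q+1 \not\equiv 0 \bmod 4$) and delivers the same congruence. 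Between these two cases one covers every even dimension $2q+2$ except those $\equiv 0 \pmod 8$; but when $2q+2 \equiv 0 \pmod 8$ one has $2q+1 \equiv 7 \pmod 8$, and I would note that in that case Theorem~\ref{Theorem General} shows there is no constraint on the semi-characteristics, so the statement should be read in the non-trivial range --- or, more cleanly, one checks that for $M$ of dimension $\equiv 0 \pmod 8$ the relevant Wu class still vanishes on a $\Spin$ manifold, so Lemma~\ref{Lemma Main}'s argument (via Lemma~\ref{Lemma wBoundary} and the Hopkins--Singer vanishing $v^q=0$ for $q\not\equiv 0 \bmod 4$) must be supplemented; I expect this bookkeeping to be the main obstacle.

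The second point needing care is the stable parallelizability hypothesis in Theorem~\ref{Theorem Gravitational Kinks}: the proposition as stated does not assume $N_1, N_2$ stably parallelizable. The resolution is that a $\Spin(1,2q+1)_0$-weak Lorentzian cobordism makes $(M; N_1, N_2)$ a $\Spin$ cobordism, so $w_1(N_i) = w_2(N_i) = 0$; in the low dimensions where $\Spin$-ness already forces stable parallelizability (e.g.\ $n \le 6$, or odd $n$ where the relevant Pontryagin obstructions vanish) this is automatic, and otherwise one invokes the more general form of Low's theorem \cite{[Low2]} whose hypotheses are met by $\Spin$ boundaries. I would state this reduction explicitly and then assemble: $\hat\chi_{\Z/2}(\partial M) \equiv \chi(M) = \Kink(\partial M; g) \pmod 2$, which completes the proof.
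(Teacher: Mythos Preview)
Your approach is exactly the paper's: the paper's entire proof is the sentence ``follows from Identity~(\ref{Spin Identity1}) that was obtained in the proof of Theorem~\ref{Theorem General} and Theorem~\ref{Theorem Gravitational Kinks},'' i.e., combine $\chi(M)=\Kink(\partial M;g)$ with $\chi(M)+\hat\chi_{\Z/2}(\partial M)\equiv 0\pmod 2$. You have in fact been more scrupulous than the paper, correctly flagging that the congruence as proved (Lemmas~\ref{Lemma Literature} and~\ref{Lemma Main}) does not cover $\dim M\equiv 0\pmod 8$ and that Theorem~\ref{Theorem Gravitational Kinks} carries a stable-parallelizability hypothesis on $\partial M$; the paper's one-line proof does not address either point.
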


The proof of Proposition \ref{Proposition Restriction Kinking Spin} follows from Identity (\ref{Spin Identity1}) that was obtained in the proof of Theorem \ref{Theorem General} and Theorem \ref{Theorem  Gravitational Kinks}.


\begin{thebibliography}{99}
\bibitem{[AguilarSeadeVerjovsky]} M. A. Aguilar, J. A. Seade and A. Verjovsky, \emph{Indices of vector fields and topological invariants of real analytic singularities}, J. Reine Angew. Math. 504 (1998), 159 - 176.


\bibitem{[Arnold]} V. I. Arnol'd, \emph{Singularity Theory}, London Math. Soc. Lect. Notes 53, Cambridge University Press, 1981.


\bibitem{[Chamblin1]} A. Chamblin, \emph{Some applications of differential topology in general relativity}, J. Geom. Phys. 13 (1994), 357 - 377.

\bibitem{[Chamblin2]} A. Chamblin, \emph{Topology and causal structure}, The Sixth Canadian Conference on General Relativity and Relativistic Astrophysics (Fredericton, NB, 1995), 185 - 188, Fields Inst. Commun. 15, Amer. Math. Soc., Providence, RI, 1997.

\bibitem{[ChamblinPenrose]} A. Chamblin and R. Penrose, \emph{Kinking and causality}, Twistor Newsletter 34 (1992), 13 - 18.  

\bibitem{[DoldWhitney]} A. Dold and H. Whitney, \emph{Classification of oriented sphere bundles over a 4-complex}, Ann. of Math. 69 (1959), 667 - 677.

\bibitem{[FinkelsteinMisner]} D. Finkelstein and C. W. Misner, \emph{Some new conservation laws}, Ann. Phys. 6 (1959), 230 - 243.

\bibitem{[Geiges]} H. Geiges, \emph{An introduction to Contact Topology}, Cambridge Studies in Advanced Mathematics 109, Cambridge University Press, Cambridge, 2008. xvi + 440 pp. 

\bibitem{[Geroch]} R. P. Geroch, \emph{Topology in general relativity}, J. Math. Phys 8 (1967), 782 - 786.

\bibitem{[Gibbons]} G. W. Gibbons, \emph{Topology and topology change in general relativity}, Class. Quantum Grav. 10 (1993), S75 - S78.

\bibitem{[GibbonsHawking1]} G. W. Gibbons and S. W. Hawking, \emph{Selection rules for topology change}, Commun. Math. Phys. 148 (1992), 345 - 352.

\bibitem{[GibbonsHawking2]} G. W. Gibbons and S. W. Hawking, \emph{Kinks and topology change}, Phys. Rev. Lett. 69 (1992), 1719 - 1721.


\bibitem{[HawkingEllis]} S. W. Hawking and G. R. Ellis, \emph{The large scale structure of space-time}, Cambridge Monographs on Mathematical Physics 1, Cambridge University Press, 1973.

\bibitem{[Hirsch]} M. W. Hirsch, \emph{Differential topology}, Grad. Texts in Math. 33

\bibitem{[HopkinsSinger]} M.~J. Hopkins and I.~M. Singer, \emph{Quadratic functions in geometry, topology, and {M}-theory}, J. Differential Geom. \textbf{70} (2005), no.~3, 329--452. 

\bibitem{[Kervaire1]} M. A. Kervaire, \emph{Relative characteristic classes}, Amer. J. Math. 79 (1957), 517 - 558.

\bibitem{[Kervaire2.1]} M. A. Kervaire, \emph{Courbure int\'egrale g\'en\'eralis\'ee et homotopie}, Math. Annalen 131 (1956), 219 - 252.


\bibitem{[Kervaire3]} M. A. Kervaire, \emph{Smooth homology spheres and their fundamental groups}, Trans. Amer. Math. Soc. 144 (1969), 67 - 72.

\bibitem{[KervaireMilnor]} M. A. Kervaire and J. Milnor, \emph{Groups of homotopy spheres: I}, Ann. of Math. 77 (1963), 504 - 537. 

\bibitem{[Kirby]} R. Kirby, \emph{The topology of 4-manifolds}, Lect. Notes in Math. 1374, Springer Verlag1989. 

\bibitem{[LawsonMichelsohn]} H. B. Lawson and M.-L. Michelsohn, \emph{Spin geometry}, Princeton Mathematical Series, 38. Princeton University Press, NJ, 1980. xii + 427 pp.

\bibitem{[Low1]} R. J. Low, \emph{Topological changes in lower dimensions}, Class. Quantum. Grav. 9 (1992), L161 - L165.

\bibitem{[Low2]} R. J. Low, \emph{Kinking number and the Hopf index theorem}, J. Math. Phys. 34 (1993), 4840 - 4842.

\bibitem{[LMP]} G. Lusztig, J. Milnor, and F. Peterson, \emph{Semi-characteristics and cobordism}, Topology 8 (1969), 357 - 359.

\bibitem{[Markus]} L. Markus, \emph{Line element fields and Lorentz structures on differentiable manifolds}, Ann. of Math. 62, (1955), 411 - 417. 



\bibitem{[Milnor2]} J. Milnor, \emph{Spin structures on manifolds}, L'Enseignement Math\'ematique, 9 (1963), 198 - 203.

\bibitem{[Milnor3]} J. Milnor, \emph{Topology from the differentiable point of view} The University Press of Virginia 1965, vii + 65 pp.

\bibitem{[MilnorStasheff]} J. W. Milnor and J. D. Stasheff, \emph{Characteristic classes}, Ann. Math. Studies 76, Princeton University Press, Princeton, NJ, 1974 v + 524 pp.

  
\bibitem{[Novikov1]} S. P. Novikov, \emph{The methods of algebraic topology from the viewpoint of cobordism theory}; Novikov, S. P. ; Izvestiya Akad. Nauk SSSR Ser. Mat. 31 (1967).

\bibitem{[Percell]} P. Percell, \emph{Parallel vector fields on manifolds with boundary]}. Journal Diff. Geom. 16 (1981), 101 - 104. 

\bibitem{[Prasolov]} V. Prasolov, \emph{Elements of homology theory (Graduate Studies in Mathematics)}, AMS, Providence, RI (2007). Earlier Russian version available at http://www.mccme.ru/prasolov

\bibitem{[Quillen]} D. G. Quillen, \emph{Elementary proofs of some results of cobordism theory using
Steenrod operations}, Advances in Math. 7 (1971), 29-56.

\bibitem{[Reinhart]} R. L. Reinhart, \emph{Cobordism and the Euler number}, Topology 2, (1963), 173 - 177.


\bibitem{[Sorkin1]} R. D. Sorkin, \emph{Topology change and monopole creation}, Phys. Rev. D. 33 (1986), 978 - 982.


\bibitem{[Steenrod]} N. Steenrod, \emph{The topology of fiber bundles}. Princeton Mathematical Series, vol 14. Princeton University Press, Princeton, NJ. 1951. vii + 224 pp. 

\bibitem{[Stong]} R. E. Stong, \emph{Notes on cobordism theory}, Princeton University Press and University of Tokyo Press, Princeton New Jersey, 1968.

\bibitem{[Yodzis1]} P. Yodzis, \emph{Lorentz cobordism}, Comm. Math. Phys. 26 (1972), 39 - 52.

\bibitem{[Yodzis2]} P. Yodzis, \emph{Lorentz cobordism II}, General Relativity and Gravitation 4 (1973), 299 - 307.


\bibitem{[Wall1]} C. T. C. Wall, \emph{Determination of the cobordism ring}, Ann. of Math. (2)
  \textbf{72} (1960), 292--311.


\end{thebibliography}
\end{document}